\newtheorem{Theorem}{Theorem}[section]
\newtheorem{Lemma}[Theorem]{Lemma}
\newtheorem{Corollary}[Theorem]{Corollary}
\newtheorem{Proposition}[Theorem]{Proposition}
\newtheorem{Definition}[Theorem]{Definition}
\newtheorem{Remark}[Theorem]{Remark}
\def \dim{{\mbox {dim}}\,}
\def\V{\mbox{Var}}
\def\Z{{\mathbb Z}}
\def\R\re
\def\V{\bf V}
\def \re{{\mathbb R}}
\def \C{{\mathbb C}}
\def \V{{\bf V}}
\def \M{{\mathbb M}}
\def \cA{{\mathcal A}}
\def \cH{{\mathcal H}}
\def \g{\mathfrak{g}}
\def \P{\mathcal P}
\def \s{\mathfrak{so}(3)}
\begin{document}
\title[Transparent pairs]{Transparent pairs}

\author[G.P. Paternain]{Gabriel P. Paternain}
 \address{ Department of Pure Mathematics and Mathematical Statistics,
University of Cambridge,
Cambridge CB3 0WB, UK}
 \email {g.p.paternain@dpmms.cam.ac.uk}




\begin{abstract} Let $M$ be a closed orientable Riemannian surface. Consider an $SO(3)$-connection $A$ and a Higgs field $\Phi:M\to\s$.
The pair $(A,\Phi)$ naturally induces a cocycle over the geodesic flow of $M$.
We classify (up to gauge transformations) cohomologically trivial pairs $(A,\Phi)$ with finite Fourier series
in terms of a suitable B\"acklund transformation. In particular, if $M$ is negatively curved we obtain a full classification of $SO(3)$-transparent pairs.

\end{abstract}

\maketitle

\section{Introduction}

Let $(M,g)$ be a closed oriented Riemannian surface and let $A$ be a $\g$-valued 1-form, where $\g$ is $\mathfrak{su}(n)$ or $\mathfrak{so}(n)$. We think of $A$ as a smooth map $A:TM\to\g$ which is linear in $v\in T_{x}M$ for all $x\in M$. The 1-form $A$ defines a unitary/orthogonal connection $d+A$ on the trivial bundle $M\times\mathbb{F}^n$, where $\mathbb{F}=\re,\C$. Suppose we are given in addition
a smooth map $\Phi:M\to\g$ which we call the {\it Higgs field}.

The pair $(A,\Phi)$ naturally induces a cocycle over
the geodesic flow $\phi_t$ of the metric $g$ acting on the unit sphere bundle
$SM$ with projection $\pi:SM\to M$. The cocycle takes values in the group $G=SU(n),SO(n)$ and
is defined as follows: let
$C:SM\times \re\to G$ be determined by
\[\frac{d}{dt}C(x,v,t)=-(A(\phi_{t}(x,v))+\Phi(\pi\circ\phi_{t}(x,v)))C(x,v,t),\;\;\;\;\;C(x,v,0)=\mbox{\rm Id}.\]
The function $C$ is a {\it cocycle}: 
\[C(x,v,t+s)=C(\phi_{t}(x,v),s)\,C(x,v,t)\]
for all $(x,v)\in SM$ and $s,t\in\re$. The cocycle $C$ is said to be {\it cohomologically trivial}
if there exists a smooth function $u:SM\to G$ such that
\[C(x,v,t)=u(\phi_{t}(x,v))u^{-1}(x,v)\]
for all $(x,v)\in SM$ and $t\in\re$. We call $u$ a trivializing function and note that two trivializing functions $u_{1}$ and $u_{2}$ (for the same cocycle) are related by $u_{2}w=u_{1}$
where $w:SM\to G$ is constant along the orbits of the geodesic flow. In particular, if $\phi_{t}$ is transitive (i.e. there is a dense orbit) there is a unique trivializing function up to
right multiplication by a constant matrix in $G$.

\begin{Definition} {\rm We will say that a pair $(A,\Phi)$ is cohomologically trivial if $C$ is cohomologically trivial.}
\end{Definition}

Observe that the gauge group given by the set of smooth maps
$r:M\to G$ acts on pairs as follows:
\[(A,\Phi)\mapsto (r^{-1}dr+r^{-1}Ar,r^{-1}\Phi r).\]
This action leaves invariant the set of cohomologically trivial pairs: indeed,
if $u$ trivializes the cocycle $C$ of a pair $(A,\Phi)$, then it is easy to check that $r^{-1}u$ trivializes the cocycle of the pair $(r^{-1}dr+r^{-1}Ar,r^{-1}\Phi r)$.

In the present paper we will classify up to gauge equivalence all cohomologically trivial $SO(3)$-pairs
which admit a trivializing function with {\it finite} Fourier series.
By finite Fourier series we mean the following. Consider a smooth map
$u:SM\to \M_{n}(\mathbb C)$, where $\M_{n}(\mathbb C)$ is the set of $n\times n$ complex matrices (we think of maps into $SO(3)$ also as maps into $\M_{3}(\C)$). Let $\rho_{t}$ be the flow of the vertical vector field $V$ determined by the principal circle fibration $\pi:SM\to M$. We shall say that $u$ has finite Fourier series
if there exists a non-negative integer $N$ such that
\[u_{m}(x,v):=\frac{1}{2\pi}\int_{0}^{2\pi}u(\rho_{t}(x,v))e^{-imt}\,dt\]
vanishes identically for all $m$ with $|m|\geq N+1$. This means that
we can expand $u$ as a finite sum $u=\sum_{m=-N}^{m=N}u_{m}$.

The classification of cohomologically trivial $SO(3)$-pairs with finite Fourier series comes in terms of a suitable B\"acklund transformation. Basically this means that there is an explicit algebraic way of constructing new cohomologically trivial pairs from known ones and it works as follows. Given a cohomologically trivial pair $(A,\Phi)$ with a trivializing function $b$ we seek a smooth
map $a:SM\to SO(3)$ such that $u:=ab$ is a trivializing function for the new pair.
The map $a$ will be special, of degree one in the velocities and such that $g:=a^{-1}V(a)$ is a map from $M$ into $\s$ with norm one, i.e. taking values
in $S^2\subset \s$.
We will show that $u$ is a trivializing function of a new pair if $g$ satisfies the equation $-\star d_{A}g=[d_{A}g,g]$, where $\star$ is the Hodge star operator of the metric and $d_{A}$ is the covariant derivative on endomorphisms associated with the connection $A$. This will form the basis of our classification and for a detailed statement we refer to Theorem \ref{cor:back} below. 
A similar procedure was employed in \cite{P1} to classify cohomologically trivial $SU(2)$-connections. However if one wishes to include a Higgs field $\Phi$
the method in \cite{P1} has to be modified. Perhaps, the main contribution of this paper
is the observation that in order to consider Higgs fields (even in the $SU(2)$ case) it is better to work
with $SO(3)$ rather $SU(2)$. Roughly speaking, the $SO(3)$ B\"acklund transformation of this paper acts as the ``square root'' of the $SU(2)$ B\"acklund transformation from \cite{P1}, this is explained in detail in Section \ref{bt}.
Note that a classification of $SO(3)$ cohomologically trivial pairs includes a classification of $SU(2)$ pairs, since any $SU(2)$ cocycle can be converted into
an $SO(3)$ cocycle just by composing with the 2-1 homomorphism $SU(2)\to SO(3)$.

The motivation for understanding cohomologically trivial pairs comes from
the related weaker notion of {\it transparent pairs}. The pair $(A,\Phi)$
is said to be transparent if $C(x,v,T)=\mbox{\rm Id}$ every time
that $\phi_{T}(x,v)=(x,v)$. In other words the linear transport equation associated
with $A+\Phi$ has trivial holonomy along closed geodesics\footnote{To avoid cumbersome notation we will often write $A+\Phi$ instead of $A+\Phi\circ\pi$.}.
These pairs are invisible from the point of view of the closed geodesics of the Riemannian metric
and it is a natural inverse problem to try to determine them.
This inverse problem can also be considered in the case of manifolds with boundary or $\re^d$ with appropriate decay
conditions at infinity, and in this context, it has been addressed by several authors, see for instance \cite{E,FU,No,Sha,V}. The inclusion of a Higgs field is natural and transparent pairs on $S^2$ are related to solutions of the
Bogomolny equations $D\Phi=\star F$ in $(2+1)$-dimensional Minkowski space \cite 
{Wa1,Wa2}. The inverse problem with a Higgs field is also related with the attenuated
X-ray transform as explained in \cite[Section 8]{No}.

Obviously a cohomologically trivial pair is transparent.
There is one important situation in which both notions agree. If $\phi_t$ is
Anosov, then the Livsic theorem \cite{L1,L2} together with the regularity results in \cite{NT}
imply that a transparent pair is also cohomologically trivial. 
The Anosov property is satisfied, if for example $(M,g)$ has negative curvature.

In Theorem \ref{theorem:finite} we will show that if $(M,g)$ is negatively 
curved, then the trivializing function of a cohomologically trivial pair (unique up to multiplication on the right by a constant)
must have finite Fourier series. We do not know if this result holds true just assuming that the geodesic flow is Anosov.

In summary we end up proving (see Theorem \ref{theorem:main}):

\medskip

\noindent{\bf Theorem.} {\it Let $M$ be a closed orientable surface of negative curvature.
Then any transparent $SO(3)$-pair can be obtained by successive applications
of B\"acklund transformations as described in Theorem \ref{cor:back}.}

\medskip

Our results also give a classification of transparent $SO(3)$ pairs
with finite Fourier series over an arbitrary Zoll surface (i.e. a Riemannian 
metric on $S^2$ all of whose geodesics are closed). This is because
we show that in this case (cf. Corollary \ref{cor:zoll}) any transparent SO(3) pair is cohomologically trivial. Of course in the Zoll case there may be many other cohomologically trivial pairs which do not have a finite Fourier series. A full classification of $U(n)$ transparent connections ($\Phi=0$) for the case of the round metric on $S^2$ has been obtained by L. Mason (unpublished) using methods from twistor theory as in \cite{Ma}.

\medskip

\noindent{\it Acknowledgements:} I am very grateful to the MSRI and to the organizers of the program Inverse Problems and Applications for hospitality while this work was being completed. I am also very grateful to Maciej Dunajski for several useful conversations related to this paper and to Will Merry for comments and corrections to a previous draft.

\section{Cocycles and auxiliary results}

Let $N$ be a closed manifold and $\phi_t:N\to N$ a smooth
flow with infinitesimal generator $X$.

Let $G$ be a compact Lie group; for the purposes
of this paper it is enough to think of $G$ as $SU(n)$ or $SO(3)$.

\begin{Definition} A $G$-valued cocycle over the flow $\phi_t$ is a map
$C:N\times\re\to G$ that satisfies
\[C(x,t+s)=C(\phi_{t}x,s)\,C(x,t)\]
for all $x\in N$ and $s,t\in\re$.
\label{def1}
\end{Definition}

In this paper the cocycles will always be smooth. In this case $C$ is
determined by its infinitesimal generator $B:N\to\mathfrak g$ given by
\[B(x):=-\left.\frac{d}{dt}\right|_{t=0}C(x,t).\]
The cocycle can be recovered from $B$ as the unique solution to
\[\frac{d}{dt}C(x,t)=-dR_{C(x,t)}(B(\phi_{t}x)),\;\;\;C(x,0)=\mbox{\rm Id},\]
where $R_{g}$ is right translation by $g\in G$.
We will indistinctly use the word ``cocycle'' for $C$ or its infinitesimal generator $B$.

\begin{Definition} The cocycle $C$ is said to be {\it cohomologically trivial}
if there exists a smooth function $u:N\to G$ such that
\[C(x,t)=u(\phi_{t}x)u^{-1}(x)\]
for all $x\in N$ and $t\in\re$.
\end{Definition}

Observe that the condition of being cohomologically trivial can be equivalently expressed in terms of the infinitesimal generator $B$ of the cocycle by saying that there exists a smooth function $u:N\to G$ that satisfies the equation
\[d_{x}u(X(x))+d_{Id}R_{u(x)}(B(x))=0\]
for all $x\in N$. If $G=SU(n)$ or $SO(3)$ we can write this more succinctly
as
\[X(u)+Bu=0\]
where it is understood that differentiation and multiplication
is in the set of complex $n\times n$-matrices for $SU(n)$ or
in the set of real $3\times 3$-matrices for $SO(3)$.

\begin{Definition} A cocycle $C$ is said to satisfy the periodic orbit
obstruction condition if $C(x,T)=\mbox{\rm Id}$ whenever
$\phi_{T}x=x$.
\end{Definition}

Obviously a cohomologically trivial cocycle satisfies the periodic
orbit obstruction condition. The converse turns out to be true
for transitive Anosov flows: this is one of the celebrated
Livsic theorems \cite{L1,L2,NT}.

\begin{Theorem}[The smooth Livsic periodic data theorem] Suppose
$\phi_t$ is a smooth transitive Anosov flow.
Let $C$ be a smooth cocycle such that $C(x,T)=\mbox{\rm Id}$ whenever
$\phi_{T}x=x$. Then $C$ is cohomologically trivial.
\label{livsic}
\end{Theorem}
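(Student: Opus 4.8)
The plan is to run the classical Livsic argument \cite{L1,L2} adapted to the compact group $G$, and then upgrade the regularity of the solution. By transitivity, fix a point $x_{0}\in N$ whose forward orbit $\{\phi_{t}x_{0}:t\ge 0\}$ is dense in $N$. Along this orbit define the candidate trivializing function by $u(\phi_{t}x_{0}):=C(x_{0},t)$, with $u(x_{0})=\mathrm{Id}$. The cocycle identity $C(x_{0},t+s)=C(\phi_{t}x_{0},s)\,C(x_{0},t)$ guarantees that this assignment is consistent and that, wherever $u$ has been defined, $C(x,t)=u(\phi_{t}x)u^{-1}(x)$ holds. Thus everything reduces to showing that $u$ extends continuously — in fact H\"older continuously — from the dense orbit to all of $N$; the equation $C(x,t)=u(\phi_{t}x)u^{-1}(x)$, and hence $X(u)+Bu=0$, then follows for the extension by continuity and density.

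For the extension I would use the Anosov closing lemma together with the periodic orbit obstruction. Suppose $\phi_{s}x_{0}$ and $\phi_{t}x_{0}$ with $s<t$ are within $\delta$ of one another; then the orbit segment joining them is $\delta$-almost closed, so the closing lemma produces a genuine periodic point $p$ with period close to $t-s$ that $\kappa\delta$-shadows it. Because $\phi_{t}$ is hyperbolic, the shadowing orbit converges to the true one exponentially fast — in forward time along the stable direction, in backward time along the unstable direction. Using that $C$ is Lipschitz in the base variable (its generator $B$ being smooth) and that $G$ is compact with a bi-invariant metric $d_{G}$, one compares $C(x_{0},\cdot)$ along the two segments and telescopes: the accumulated discrepancy is dominated by a geometric series in the exponential shadowing rate, giving $d_{G}\bigl(C(x_{0},s)^{-1}C(x_{0},t),\,C(p,t-s)\bigr)\le \kappa\,\delta^{\alpha}$ for some $\alpha\in(0,1)$ independent of $s,t$. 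The hypothesis $C(p,t-s)=\mathrm{Id}$ then yields $d_{G}\bigl(u(\phi_{s}x_{0}),u(\phi_{t}x_{0})\bigr)\le\kappa\,\delta^{\alpha}$. Hence $u$ is uniformly $\alpha$-H\"older on the dense orbit and extends to a H\"older map $u:N\to G$ with $C(x,t)=u(\phi_{t}x)u^{-1}(x)$.

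It remains to promote $u$ from H\"older to $C^{\infty}$. This is precisely the content of the regularity theorem of Ni\c{t}ic\u{a}--T\"or\"ok: a continuous (hence, by the previous step, H\"older) solution of the transport equation $X(u)+Bu=0$ with $B$ smooth and $\phi_{t}$ a smooth Anosov flow is automatically smooth. Invoking \cite{NT} completes the proof. The main obstacle is the shadowing estimate of the second paragraph. The novelty relative to the scalar Livsic theorem is the non-commutativity of $G$: when telescoping the cocycle along the two nearby orbits, each error term must be transported past a partial product $C(\cdot,\cdot)$, and one needs compactness of $G$ — so that these conjugations are uniformly bounded, indeed isometric for a bi-invariant metric — in order for the geometric series to converge, with constants uniform in the length $t-s$ of the orbit segment. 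Making this quantitative, via the uniform exponential rates of the Anosov splitting, is the technical heart of the argument.
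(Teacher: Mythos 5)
The paper states this theorem without proof, citing Livsic \cite{L1,L2} for the existence of a continuous (indeed H\"older) trivializing function and Ni\c{t}ic\u{a}--T\"or\"ok \cite{NT} for the upgrade to smoothness, and your proposal is a correct reconstruction of exactly that argument: define $u$ along a dense orbit, use the Anosov closing lemma together with the periodic orbit obstruction to get a uniform H\"older estimate, extend by continuity, then invoke \cite{NT}. The technical points you flag are the right ones (non-commutativity of $G$ handled by bi-invariance of the metric on a compact group, and the exponential shadowing rates making the telescoped series converge uniformly in the length of the orbit segment); the only standard detail you gloss over is that for flows the closing lemma returns a periodic orbit whose period only approximates $t-s$, so a small time reparametrization must be absorbed into the estimate.
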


There is another situation of interest to us in which a cocycle that satisfies
the periodic orbit obstruction condition is cohomologically
trivial. Suppose $N$ is a closed orientable 3-manifold which admits
a free circle action with infinitesimal generator $V$ and flow $\phi_t$.
The quotient of $N$ by the circle action is a closed orientable surface
$M$; let $e(N)\in H^{2}(M,\Z)=\Z$ be the Euler class of the circle bundle $p:N\to M$.

\begin{Proposition} Any cocycle $B:N\to \mathfrak{su}(n)$ over $\phi_{t}$ which satisfies the
periodic orbit obstruction condition is cohomologically trivial.
The same holds for any cocycle $B:N\to \mathfrak{so}(3)$ provided
that $e(N)$ is even.
\label{prop:trivial}
\end{Proposition}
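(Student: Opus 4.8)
The plan is to translate cohomological triviality of $B$ into the triviality of an auxiliary principal $G$-bundle on $M$, and then to identify the single obstruction to that triviality in terms of $e(N)$. We may assume $N$, hence $M$, connected, and normalise the circle action to have period $2\pi$, so that every orbit of $V$ is a periodic orbit of period $2\pi$; the periodic orbit obstruction condition then reads $C(x,2\pi)=\mbox{\rm Id}$ for all $x\in N$, where $C$ is the cocycle generated by $B$. By the reformulation recalled above, $B$ is cohomologically trivial precisely when there is a smooth $u\colon N\to G$ with $V(u)+Bu=0$. Along a single orbit $p^{-1}(m)$ this is a linear ODE whose solutions are, thanks to $C(x,2\pi)=\mbox{\rm Id}$, all $2\pi$-periodic, hence genuinely functions on the circle $p^{-1}(m)$; the set $S_m$ of such solutions is a right $G$-torsor, two solutions differing by right multiplication by a $G$-element constant along the orbit. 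Letting $m$ vary, the $S_m$ assemble into a smooth principal $G$-bundle $P\to M$: over a local trivialisation $\psi_i\colon p^{-1}(U_i)\to U_i\times S^1$ of $p\colon N\to M$, evaluating a solution at $\psi_i^{-1}(m,0)$ identifies $P|_{U_i}$ with $U_i\times G$, and smooth dependence of ODE solutions on parameters makes the gluing smooth. Under this correspondence smooth trivialising functions $u\colon N\to G$ are exactly smooth sections of $P$, so the Proposition is equivalent to the assertion that $P$ is trivial.

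Since $M$ is a closed orientable surface and $G$ is connected, $P$ is automatically trivial over the $1$-skeleton and the only obstruction to a global section lies in $H^{2}(M,\pi_1(G))$. For $G=SU(n)$ this group is $H^{2}(M,0)=0$, which proves the first assertion. For $G=SO(3)$ it is $H^{2}(M,\Z/2)\cong\Z/2$ and equals $w_2(P)$, so everything reduces to showing that $w_2(P)=0$ when $e(N)$ is even.

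For this I would bring in the invariant $c=c(B)\in\pi_1(SO(3))=\Z/2$, the homotopy class of the holonomy loop $\theta\mapsto C(x,\theta)$, $\theta\in[0,2\pi]$, based at $\mbox{\rm Id}$; it is well defined because $N$ is connected and this class is locally constant in $x$. Equivalently, lifting $C$ to the $SU(2)$-valued cocycle $\widetilde C$ with the same generator $B$ (viewed in $\mathfrak{su}(2)\cong\s$), one has $\widetilde C(x,2\pi)=(-1)^{c}\mbox{\rm Id}$ for all $x$. The key computation is then
\[ w_2(P)=c(B)\cdot e(N)\pmod 2 . \]
To obtain it, cover $M$ by open sets $U_i$ trivialising $p\colon N\to M$, with transition functions $a_{ij}\colon U_{ij}\to\re/2\pi\Z$, so that $e(N)$ is represented by the integral $2$-cocycle $n_{ijk}=\frac{1}{2\pi}(\tilde a_{ij}+\tilde a_{jk}-\tilde a_{ik})$ on triple overlaps, for chosen real lifts $\tilde a_{ij}$. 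The local trivialising functions $\hat u_i(m,\theta)=C(\psi_i^{-1}(m,0),\theta)$, which are $2\pi$-periodic in $\theta$, yield transition functions $\hat u_i(m,a_{ij}(m))^{-1}$ for $P$; lifting $\hat u_i(m,\cdot)$ through $SU(2)\to SO(3)$ and using the cocycle identity for $\widetilde C$, one checks that the $SU(2)$-lifts of these transition functions fail the cocycle condition over $U_{ijk}$ by exactly $(-1)^{c\,n_{ijk}}\mbox{\rm Id}$, which is the displayed identity. Granting it, $e(N)$ even forces $w_2(P)=0$, so $P$ is trivial and $B$ is cohomologically trivial, as required.

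The ODE reduction, the torsor and bundle structure of $P$, its smoothness, and the obstruction-theoretic count are all routine; the step I expect to need genuine care is the identity $w_2(P)=c(B)\cdot e(N)\bmod 2$ --- that is, tracking the $SU(2)$-lifts carefully enough to see that the coboundary of the lifted transition functions of $P$ is exactly the holonomy around a full orbit raised to the power $n_{ijk}$, which is the precise mechanism by which the parity of the Euler number enters. I would also note the shortcut that when $c(B)=0$ the cocycle $\widetilde C$ itself satisfies the periodic orbit obstruction condition, so by the $SU(n)$ case it is cohomologically trivial and projecting a trivialising function to $SO(3)$ finishes; thus the genuine content of the hypothesis ``$e(N)$ even'' is that it also disposes of the case $c(B)=1$.
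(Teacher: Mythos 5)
Your overall framework -- reduce the problem to the ODE $V(u)+Bu=0$, package the local solutions into a principal $G$-bundle $P\to M$ (the paper's bundle $E$), observe that global trivialising functions correspond to global sections of $P$, and then kill the one obstruction class in $H^{2}(M,\pi_1(G))$ -- is exactly the paper's, and your treatment of the $SU(n)$ case is identical to the paper's (a principal $SU(n)$-bundle over a surface is trivial since $\pi_1(SU(n))=0$).

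Where you genuinely diverge from the paper is the $SO(3)$ case, and both arguments are correct. The paper proceeds topologically: the explicit formula $g^{*}_{p^{-1}(U)}(y)=[u_U\circ\psi_U(y)]^{-1}$ exhibits $p^*E$ as a trivial bundle, so $p^*w_2(E)=0$, and the Gysin sequence of $p\colon N\to M$ then shows that $\ker\bigl(p^*\colon H^2(M,\Z_2)\to H^2(N,\Z_2)\bigr)$ is the image of cupping with $e(N)$, which vanishes when $e(N)$ is even. You instead compute the obstruction class outright: introducing $c(B)\in\pi_1(SO(3))=\Z/2$, the homotopy class of the holonomy loop $\theta\mapsto C(x,\theta)$, you track $SU(2)$-lifts of the transition functions $\varphi_{ij}(m)=\hat u_i(m,a_{ij}(m))$ through the \v{C}ech coboundary, use $\tilde a_{jk}+\tilde a_{ij}=\tilde a_{ik}+2\pi n_{ijk}$ together with the $2\pi$-quasiperiodicity of the lift, and land on $w_2(P)=c(B)\cdot e(N)\bmod 2$. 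This is a strictly finer statement than the paper needs (the Gysin argument only locates $w_2(E)$ inside the two-element set $\{0,\,e(N)\bmod 2\}$, without determining which), and it also yields your nice shortcut that when $c(B)=0$ the $SU(2)$-lift $\widetilde C$ already satisfies the periodic-orbit obstruction condition, so one can conclude directly from the $SU(n)$ case with no parity hypothesis at all. The trade-off is that the paper's Gysin argument avoids all explicit cochain bookkeeping, while yours makes the mechanism by which the parity of $e(N)$ enters completely transparent. Both are sound.
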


\begin{proof} Consider first the case of $B:N\to \mathfrak{su}(n)$.
Let $U$ be a neighbourhood of $M$ such that $p^{-1}(U)$ is trivialized as $U\times S^1$ via $\psi_{U}:p^{-1}(U)\to U\times S^1$. In this trivialization we may write $V=\partial/\partial\theta$, where
$\theta\in \re/2\pi\Z=S^1$. The fact that $B$ satisfies the periodic orbit obstruction condition means that we can find a smooth $u_{U}:U\times S^1\to SU(n)$
such that 
\begin{equation}
\frac{\partial u_{U}}{\partial\theta}(x,\theta)+B(x,\theta)u_{U}(x,\theta)=0
\label{eq:co}
\end{equation}
for all $(x,\theta)\in U\times S^1$ and $u_{U}(x,0)=\mbox{\rm Id}$.
We will show that the functions $u_{U}$ can be glued to determine a globally
defined $u:N\to SU(n)$ which solves $V(u)+Bu=0$.

Observe first that if $g:U\to SU(n)$ is any smooth map, then $u_{U}g$ also satisfies (\ref{eq:co}).

Take now another set $U'$ which overlaps with $U$ and over which we can
 trivialize $p:N\to M$. We obtain a transition function 
$\psi_{UU'}:U\cap U'\to S^1$. To have a globally defined $u$ we need the existence of smooth functions
$g_{U}:U\to SU(n)$ such that
\[u_{U}(x,\psi_{UU'}(x))g_{U}(x)=g_{U'}(x)\]
for all $x\in U\cap U'$ and all overlaps of a covering of $M$ by trivializing
sets. Observe that by construction
\begin{equation}
u_{U'}(x,\theta)=u_{U}(x,\theta+\psi_{UU'}(x))[u_{U}(x,\psi_{UU'}(x))]^{-1}\;\;\;\;\mbox{\rm on}\;(U\cap U')\times S^1.
\label{eq:agree}
\end{equation}

The key observation now is that $\varphi_{UU'}:U\cap U'\to SU(n)$ given
by
\[\varphi_{UU'}(x):=u_{U}(x,\psi_{UU'}(x))\]
is an $SU(n)$-cocycle in the sense of principal bundles. Indeed, the cocycle property
$\varphi_{UU"}(x)=\varphi_{U'U''}(x)\,\varphi_{UU'}(x)$ follows right away from the fact
that $\psi_{UU'}$ is an $S^1$-cocycle and (\ref{eq:agree}). But any principal $SU(n)$-bundle over a surface is trivial, thus there exist smooth functions
$g_{U}:U\to SU(n)$ such that
\[\varphi_{UU'}(x)=g_{U'}(x)\,(g_{U}(x))^{-1}\]
which says precisely that $u$ can be defined globally.

The argument for $SO(3)$ is pretty much the same except that we now have
two different $SO(3)$-bundles over the surface, the non-trivial one having
non-zero Stiefel-Whitney class $w_2\in H^{2}(M,\Z_{2})$.
Let $E$ denote the principal bundle determined by $\varphi_{UU'}$ and
let us check that $E$ is trivial if $e(N)$ is even. Perhaps the easiest way to see this is to argue as follows. Consider the pull-back bundle $p^*E$ and observe
that it is trivial. Indeed, its transition functions are given by
\[\varphi^*_{p^{-1}(U)p^{-1}(U')}(y)=\varphi_{UU'}(py)\]
and if we set
\[g^{*}_{p^{-1}(U)}(y)=[u_{U}\circ\psi_{U}(y)]^{-1}\]
then
\[\varphi^*_{p^{-1}(U)p^{-1}(U')}(y)=g^{*}_{p^{-1}(U')}\,\left(g^{*}_{p^{-1}(U)}\right)^{-1}.\]
It follows that $w_{2}(p^{*}E)=p^{*}w_{2}(E)=0$. But using the Gysin sequence
of the circle bundle $p:N\to M$, we see that $p^{*}w_{2}(E)$ vanishes iff
$w_{2}(E)$ is in the image of the map $H^{0}(M,\Z_{2})\to H^{2}(M,\Z_{2})$ given by the cup product with the Euler class $e(N)$.
Hence $w_{2}(E)=0$ as desired.

\end{proof}

\begin{Remark}{\rm So far we have considered {\it right} cocycles. However
one can easily go from a right cocycle $C$ to a left cocycle simply by
considering $C^{-1}$ and thus Proposition \ref{prop:trivial} applies to them as well.

}
\end{Remark}

Let us now describe the two main consequences of this result.

\subsection{Zoll surfaces} Recall that a Zoll surface is a Riemannian
metric on $S^2$ all of whose geodesics are closed. By \cite{GG} all the closed geodesics are simple and with the same length and thus the action of the geodesic flow on the unit circle bundle $SM$ fits the setting above. In fact, by \cite{We} the Euler class of the circle bundle determined by the action of the geodesic flow is even.

Observe that by definition, saying that a pair $(A,\Phi)$ is transparent is the same as saying that the cocycle determined
by $(A,\Phi)$ satisfies the periodic orbit obstruction condition. Thus Proposition \ref{prop:trivial} gives right away the following:

\begin{Corollary} Let $(S^2,g)$ be a Zoll surface. Then any transparent $SU(n)$ or SO(3)-pair $(A,\Phi)$ is cohomologically trivial.
\label{cor:zoll}
\end {Corollary}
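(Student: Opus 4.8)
The plan is to reduce the statement directly to Proposition \ref{prop:trivial}; the only substantive point is checking that the geodesic flow of a Zoll surface genuinely fits the hypotheses of that proposition. First I would unwind the definitions: by construction a pair $(A,\Phi)$ is transparent exactly when its induced cocycle $C$ over the geodesic flow $\phi_t$ on $SM$ satisfies the periodic orbit obstruction condition, i.e. $C(x,v,T)=\mathrm{Id}$ whenever $\phi_T(x,v)=(x,v)$. So it is enough to show that any such cocycle (with values in $SU(n)$ or $SO(3)$) over the geodesic flow of a Zoll surface is cohomologically trivial.

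Next I would recall the structure of Zoll surfaces. By the Gromoll--Grove theorem \cite{GG}, for a Zoll metric on $S^2$ all geodesics are closed, simple, and of a common length $\ell$. Hence $\phi_t$ is periodic of period $\ell$, and because the geodesics are simple no orbit in $SM$ closes up before time $\ell$, so the induced $\re/\ell\Z$-action on $SM$ is \emph{free}. Thus $SM$ is the total space of a principal circle bundle $p:SM\to B$ over a closed orientable surface $B$, with the geodesic vector field playing the role of the infinitesimal generator $V$; this is precisely the setting of Proposition \ref{prop:trivial} with $N=SM$. For $SU(n)$-pairs Proposition \ref{prop:trivial} now applies with no further input and the corollary follows. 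For $SO(3)$-pairs I would additionally invoke Weinstein's computation \cite{We}, which gives that the Euler class $e(SM)\in H^{2}(B,\Z)$ of this circle bundle is even; Proposition \ref{prop:trivial} then again yields that $C$ is cohomologically trivial, hence that $(A,\Phi)$ is cohomologically trivial.

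The main obstacle is not homological bookkeeping but being careful with the two cited inputs: one needs ``simple'' in \cite{GG} to guarantee a genuinely free circle action (rather than, say, an action with a geodesic covering itself, which would make the quotient only an orbifold), and one needs the \emph{parity} statement of \cite{We} rather than merely some control of the Euler class, since that parity is exactly the hypothesis of the $SO(3)$ half of Proposition \ref{prop:trivial}. Granting both, the corollary drops out immediately.
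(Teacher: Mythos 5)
Your proof is correct and follows the paper's own argument essentially verbatim: reduce to Proposition \ref{prop:trivial} via the free circle action on $SM$ given by the geodesic flow, using Gromoll--Grove for simplicity and common length and Weinstein for the evenness of the Euler class needed in the $SO(3)$ case.
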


\subsection{An important step to set up the $SO(3)$ B\"acklund transformation}

In the next application of Proposition \ref{prop:trivial}, our circle
bundle is the unit tangent bundle $\pi:SM\to M$ and the circle action
is given by the flow of the vertical vector field $V$. Since $M$ is 
orientable, 
the Euler class of this bundle is obviously even (the Euler characteristic 
of $M$ is even). We consider the left cocycle over $V$ determined by 
$f:M\to \mathfrak{su}(n)$ or $f:M\to \mathfrak{so}(3)$ which we also regard
as a function over $SM$ by composing it with $\pi$. In other words, if $\rho_t$ is the flow of $V$, we consider the cocycle determined by
\[\partial_{t}C(x,v,t)=C(x,v,t)f(\pi\circ\rho_{t}(x,v)),\;\;\;C(x,v,0)=\mbox{\rm Id}.\]
Since $f$ only depends on $x\in M$ we can write $C(x,v,t)=\exp(t\,f(x))$.

There are two situations of interest to us. The first is $f:M\to\mathfrak{su}(2)$ with $f^{2}=-\mbox{\rm Id}$. Under this condition for any $x\in M$, $f(x)$ has eigenvalues $\pm i$ and it clearly satisfies the periodic orbit obstruction condition with respect to $V$, thus we have:

\begin{Corollary} Let $f:M\to\mathfrak{su}(2)$ be a smooth map
with $f^{2}=-\mbox{\rm Id}$. Then there exists a smooth $u:SM\to SU(2)$
such that $uf=V(u)$.
\end{Corollary}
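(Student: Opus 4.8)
The plan is to read this corollary off Proposition~\ref{prop:trivial}, applied to the vertical circle flow $\rho_t$ on $N=SM$. The only thing that really needs checking is that the cocycle attached to $f$ satisfies the periodic orbit obstruction condition, and this is exactly the step where the hypothesis $f^2=-\mathrm{Id}$ is used.

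First I would make the cocycle explicit. Because $f$ factors through $\pi:SM\to M$ it commutes with the vertical flow, so the unique solution of $\partial_t C(x,v,t)=C(x,v,t)f(\pi\circ\rho_t(x,v))$ with $C(\cdot,0)=\mathrm{Id}$ is $C(x,v,t)=\exp\big(t f(x)\big)$; equivalently $C^{-1}(x,v,t)=\exp\big(-t f(x)\big)$, which is a smooth right $SU(2)$-cocycle over $\rho_t$ with infinitesimal generator $B=f\circ\pi$. Next I would verify the periodic orbit obstruction condition: every orbit of $\rho_t$ is a fibre of $\pi:SM\to M$, hence closed of period $2\pi$, so one only needs $\exp\big(-2\pi f(x)\big)=\mathrm{Id}$ for all $x$. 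Since $f(x)\in\mathfrak{su}(2)$ and $f(x)^2=-\mathrm{Id}$, the matrix $f(x)$ has eigenvalues $\pm i$, and the claim follows immediately.

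Then Proposition~\ref{prop:trivial} applies (the structure group is $SU(2)$, so no parity hypothesis on $e(SM)=\chi(M)$, which is even anyway, is needed) and produces a smooth $v:SM\to SU(2)$ with $V(v)+f v=0$. Setting $u:=v^{-1}$ and differentiating $v v^{-1}=\mathrm{Id}$ along $V$ gives $V(u)=-v^{-1}V(v)v^{-1}=v^{-1}f=u f$, which is the desired identity. Equivalently, one could invoke the left-cocycle form of Proposition~\ref{prop:trivial} from the Remark and obtain $u$ directly, trading the inversion trick for a change of convention.

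There is essentially no hard step here: the whole content is the elementary fact that $f^2=-\mathrm{Id}$ trivialises the vertical holonomy. The only point demanding care is keeping the left/right cocycle conventions straight --- the infinitesimal triviality equation provided in the text is $X(u)+Bu=0$, whereas the statement asks for $uf=V(u)$ --- which is precisely why passing to $C^{-1}$ (or to the left version of the Proposition) is the convenient route.
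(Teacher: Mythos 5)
Your proof is correct and follows the paper's own route: observe that $C(x,v,t)=\exp(tf(x))$, use $f^2=-\mathrm{Id}$ to see the eigenvalues are $\pm i$ so the periodic orbit obstruction condition holds over the vertical flow, and invoke Proposition~\ref{prop:trivial} (with the left/right cocycle conversion noted in the Remark). Your extra care with the cocycle conventions and the inversion $u=v^{-1}$ is a faithful expansion of what the paper leaves implicit.
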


This corollary was proved with slightly different methods 
in \cite[Lemma 4.1]{P1}. Observe that any two solutions $u$ and $v$ of $uf=V(u)$
are related by $u=rv$, where $r:M\to SU(2)$. 
Consider local coordinates $(x,y)$ on a neighbourhood $U$ of $M$. This gives coordinates $(x,y,\theta)$ on $SM$ where
$\theta$ is the angle between a unit vector $v$ and $\partial/\partial x$.
In these coordinates, $V=\partial/\partial\theta$ and any solution $u$ of
$uf=V(u)$ can be expressed locally as
\[u(x,y,\theta)=r(x,y)(\cos\theta\,\mbox{\rm Id}+\sin\theta\,f(x,y)),\]
for some smooth $r:U\to SU(2)$.

The next situation of interest to us is that of a smooth $f:M\to\mathfrak{so}(3)$ with $f^{3}+f=0$. Again, under this condition $f$ defines an $SO(3)$-cocycle over $V$ satisfying the periodic orbit obstruction condition and thus
by Proposition \ref{prop:trivial} we have:

\begin{Corollary} Let $f:M\to\mathfrak{so}(3)$ be a smooth map
with $f^{3}+f=0$. Then there exists a smooth $u:SM\to SO(3)$
such that $uf=V(u)$.
\label{cor:existencea}
\end{Corollary}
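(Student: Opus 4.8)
The plan is to deduce Corollary~\ref{cor:existencea} directly from Proposition~\ref{prop:trivial} applied to the circle bundle $\pi:SM\to M$ with $V$ the vertical vector field. The Euler class of this bundle equals the Euler characteristic $\chi(M)$, which is even since $M$ is closed orientable; hence the hypothesis ``$e(N)$ even'' in Proposition~\ref{prop:trivial} is automatically satisfied and we are free to use the $\mathfrak{so}(3)$ version of that proposition. So the only real work is to check that the left cocycle over $\rho_t$ generated by $f$ (regarded as a function on $SM$ via $\pi$) satisfies the periodic orbit obstruction condition; then the proposition, together with the Remark allowing passage between left and right cocycles, yields a smooth $u:SM\to SO(3)$ with $uf=V(u)$.

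First I would write down the cocycle explicitly. Because $f$ depends only on $x\in M$ and is constant along the orbits of $\rho_t$, the solution of $\partial_t C(x,v,t)=C(x,v,t)\,f(\pi\circ\rho_t(x,v))$ with $C(x,v,0)=\mbox{\rm Id}$ is simply $C(x,v,t)=\exp(t\,f(x))$, exactly as noted in the text just above the statement. The periodic orbits of $\rho_t$ are the fibres of $\pi$, all of period $2\pi$. Thus the periodic orbit obstruction condition is the single pointwise requirement $\exp(2\pi f(x))=\mbox{\rm Id}$ for every $x\in M$.

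Next I would verify this pointwise identity using the hypothesis $f^3+f=0$, i.e. $f(f^2+\mathrm{Id})=0$. The matrix $f(x)\in\mathfrak{so}(3)$ is skew-symmetric, hence diagonalizable over $\C$ with purely imaginary eigenvalues; the relation $\lambda^3+\lambda=0$ forces each eigenvalue to lie in $\{0,i,-i\}$, and since $f(x)$ is a real $3\times 3$ skew-symmetric matrix its eigenvalues are either all $0$ (so $f(x)=0$) or $\{0,i,-i\}$. In either case $\exp(2\pi f(x))$ has all eigenvalues equal to $e^{2\pi\lambda}=1$, and being conjugate to $\exp$ of a diagonalizable matrix it is itself diagonalizable, hence $\exp(2\pi f(x))=\mbox{\rm Id}$. (Concretely, writing $f=f(x)$ one has $f^2$ acting as $-\mbox{\rm Id}$ on the image of $f$ and as $0$ on its kernel, so $\exp(tf)=\mbox{\rm Id}+\sin t\,f+(1-\cos t)f^2$ by the $\mathfrak{so}(3)$ Rodrigues formula, which equals $\mbox{\rm Id}$ at $t=2\pi$.) This is the step most likely to need care, since one must be sure the normalization $f^3+f=0$ really pins the rotation ``speed'' to be an integer and so makes every fibre a genuine closed orbit on which the cocycle returns to the identity.

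Finally, with the periodic orbit obstruction condition established, Proposition~\ref{prop:trivial} (in its $\mathfrak{so}(3)$ form, applicable because $e(SM)=\chi(M)$ is even) produces a smooth $u:SM\to SO(3)$ trivializing the cocycle; unwinding the trivialization condition $V(u)+(-f)u=0$ for the right cocycle $C^{-1}$, or equivalently applying the Remark to pass from the left cocycle generated by $f$, gives $V(u)=uf$, as claimed. I expect no obstacle in this last bookkeeping step beyond keeping track of the left/right convention.
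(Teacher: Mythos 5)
Your proof is correct and follows essentially the same route as the paper: apply Proposition~\ref{prop:trivial} to the circle bundle $SM\to M$ (Euler class $=\chi(M)$ even), checking the periodic orbit obstruction via $\exp(2\pi f(x))=\mathrm{Id}$ using the eigenvalue/Rodrigues argument for $f^3+f=0$, then pass between left and right cocycles as in the paper's Remark. The only blemish is a small left/right slip in the final bookkeeping --- the trivialization condition for $D=C^{-1}$ is $V(w)+fw=0$, and one then sets $u=w^{-1}$ to get $V(u)=uf$, rather than $V(u)+(-f)u=0$ --- but since you state the correct final identity $V(u)=uf$, this does not affect the argument.
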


This last corollary will be essential to run the B\"acklund transformation
for transparent pairs.

As above, if we consider local coordinates any solution $u$ to $uf=V(u)$ can be written locally as
\[u(x,y,\theta)=r(x,y)(\mbox{\rm Id}+f(x,y)^2+\sin\theta\,f(x,y)-\cos\theta\,f^{2}(x,y))\]
for some smooth $r:U\to SO(3)$. Note that
\[\mbox{\rm Id}+f^2+\sin\theta\,f-\cos\theta\,f^{2}=
-\frac{e^{-i\theta}}{2}f(f-i\mbox{\rm Id})+(\mbox{\rm Id}+f^2)-\frac{e^{i\theta}}{2}f(f+i\mbox{\rm Id}).\]
From these local expressions it is clear that $u=u_{-1}+u_{0}+u_{1}$ with
\begin{align*}
&\mbox{\rm Ker}\,u_{0}=E_{i}\oplus E_{-i},\\
&\mbox{\rm Ker}\,u_{1}=\mbox{\rm Ker}(f)\oplus E_{-i},\\
&\mbox{\rm Ker}\,u_{-1}=\mbox{\rm Ker}(f)\oplus E_{i},
\end{align*}
where $E_{\pm i}$ is the eigenspace of $f$ corresponding to the eigenvalue $\pm i$.

\section{The set up}
\label{prelim}
In this section we summarise the setting in \cite{P} and the results needed for
the subsequent sections.

Let $M$ be an oriented surface with a Riemannian metric and let
$SM$ be its unit tangent bundle. Recall that $SM$ has a
canonical framing $\{X,H,V\}$, where $X$ is the geodesic vector field, $V$
is the vertical vector field and $H=[V,X]$ is the horizontal vector field.

Let $\M_{n}(\mathbb C)$ be the set of $n\times n$ complex
matrices. Given functions $u,v:SM\to \M_{n}(\mathbb C)$ we consider the
inner product
\[\langle u,v \rangle =\int_{SM}\mbox{\rm trace}\,(u\,v^*)\,d\mu,\]
where $\mu$ is the Riemannian measure associated with the Sasaki metric
of $SM$ which makes $\{X,H,V\}$ into an orthonormal frame.
The space $L^{2}(SM,\M_{n}(\mathbb C))$ decomposes orthogonally
as a direct sum
\[L^{2}(SM,\M_{n}(\mathbb C))=\bigoplus_{m\in\mathbb Z}H_{m}\]
where $-iV$ acts as $m\,\mbox{\rm Id}$ on $H_m$.

Following Guillemin and Kazhdan in \cite{GK} we introduce the following
first order elliptic operators 
$$\eta_{+},\eta_{-}:C^{\infty}(SM,\M_{n}(\mathbb C))\to
C^{\infty}(SM,\M_{n}(\C))$$ given by
\[\eta_{+}:=(X-iH)/2,\;\;\;\;\;\;\eta_{-}:=(X+iH)/2.\]
Clearly $X=\eta_{+}+\eta_{-}$. Let $\Omega_{m}:=C^{\infty}(SM,\M_{n}(\C))\cap H_{m}$.
We have
\[\eta_{+}:\Omega_{m}\to \Omega_{m+1},\;\;\;\;\eta_{-}:\Omega_{m}\to \Omega_{m-1},\;\;\;\;(\eta_{+})^{*}=-\eta_{-}.\]

If a $U(n)$ pair $(A,\Phi)$ is cohomologically trivial there exists a smooth $u:SM\to U(n)$ such that $C(x,v,t)=u(\phi_{t}(x,v))u^{-1}(x,v)$.
Differentiating with respect to $t$ and setting $t=0$, this is equivalent to
$X(u)+(A+\Phi)u=0$, where we now regard $A$ and $\Phi$ as functions $A,\Phi:SM\to\mathfrak{u}(n)$.
To deal with this equation, we introduce the ``twisted'' operators
\[\mu_{+}:=\eta_{+}+A_1,\;\;\;\;\mu_{-}:=\eta_{-}+A_{-1}.\]
where $A=A_{-1}+A_{1}$, and
\[A_{1}:=\frac{A-iV(A)}{2}\in H_{1},\]
\[A_{-1}:=\frac{A+iV(A)}{2}\in H_{-1}.\]
Observe that this decomposition corresponds precisely with the usual decomposition of $\mathfrak{u}(n)$-valued 1-forms
on a surface:
\[\Omega^{1}(M,\mathfrak{u}(n))\otimes \C=\Omega^{1,0}(M,\mathfrak{u}(n))\oplus \Omega^{0,1}(M,\mathfrak{u}(n)),\]
where $\star=-i$ on $\Omega^{1,0}$ and $\star=i$ on $\Omega^{0,1}$ (here $\star$ is the Hodge star operator of the metric).

We also have
\[\mu_{+}:\Omega_{m}\to \Omega_{m+1},\;\;\;\;\mu_{-}:\Omega_{m}\to \Omega_{m-1},\;\;\;\;(\mu_{+})^{*}=-\mu_{-}.\]
The equation $X(u)+(A+\Phi)u=0$ is now $\mu_{+}(u)+\mu_{-}(u)+\Phi u=0$.

For future use, it is convenient to write the operators $\eta_{-}$ and $\mu_{-}$ in local
coordinates. Consider isothermal coordinates $(x,y)$ on $M$ such that the metric
can be written as $ds^2=e^{2\lambda}(dx^2+dy^2)$ where $\lambda$ is a smooth
real-valued function of $(x,y)$. This gives coordinates $(x,y,\theta)$ on $SM$ where
$\theta$ is the angle between a unit vector $v$ and $\partial/\partial x$.
In these coordinates, $V=\partial/\partial\theta$ and
the vector fields $X$ and $H$ are given by:
\[X=e^{-\lambda}\left(\cos\theta\frac{\partial}{\partial x}+
\sin\theta\frac{\partial}{\partial y}+
\left(-\frac{\partial \lambda}{\partial x}\sin\theta+\frac{\partial\lambda}{\partial y}\cos\theta\right)\frac{\partial}{\partial \theta}\right);\]
\[H=e^{-\lambda}\left(-\sin\theta\frac{\partial}{\partial x}+
\cos\theta\frac{\partial}{\partial y}-
\left(\frac{\partial \lambda}{\partial x}\cos\theta+\frac{\partial \lambda}{\partial y}\sin\theta\right)\frac{\partial}{\partial \theta}\right).\]
Consider $u\in\Omega_m$ and write it locally as $u(x,y,\theta)=h(x,y)e^{im\theta}$.
Using these formulas a simple, but tedious calculation shows that
\begin{equation}
\eta_{-}(u)=e^{-(1+m)\lambda}\bar{\partial}(he^{m\lambda})e^{i(m-1)\theta},
\label{eq:eta}
\end{equation}
where $\bar{\partial}=\frac{1}{2}\left(\frac{\partial}{\partial x}+i\frac{\partial}{\partial y}\right)$.
In order to write $\mu_{-}$ suppose that $A(x,y,\theta)=a(x,y)\cos\theta+b(x,y)\sin\theta$.
If we also write $A=A_{x}dx+A_{y}dy$, then $A_x=ae^{\lambda}$ and $A_y=be^{\lambda}$.
Let $A_{\bar{z}}:=\frac{1}{2}(A_{x}+iA_{y})$.
Using the definition of $A_{-1}$ we derive
\begin{equation}
A_{-1}=\frac{1}{2}(a+ib)e^{-i\theta}=A_{\bar{z}}d\bar{z}.
\label{eq:a1}
\end{equation}
Putting this together with (\ref{eq:eta}) we obtain
\begin{equation}
\mu_{-}(u)=e^{-(1+m)\lambda}\left(\bar{\partial}(he^{m\lambda})+A_{\bar{z}}he^{m\lambda}\right)e^{i(m-1)\theta}.
\label{eq:mu}
\end{equation}

Note that $\Omega_m$ can be identified with the set of smooth sections
of the bundle $(M\times \M_{n}(\mathbb C))\otimes K^{\otimes m}$ where $K$ is the canonical line bundle. The identification takes $u=he^{im\theta}$ into $he^{m\lambda}(dz)^m$ ($m\geq 0$)
and $u=he^{-im\theta}\in \Omega_{-m}$ into $he^{m\lambda}(d\bar{z})^m$.
The second equality in (\ref{eq:a1}) should be understood using this
identification.

\section{Finite Fourier series in negative curvature}

Given an element $u\in C^{\infty}(SM,\M_{n}(\C))$, we write
$u=\sum_{m\in\Z}u_{m}$, where $u_m\in \Omega_m$.
We will say that $u$ has degree $N$, if $N$ is the smallest
non-negative integer such that $u_{m}=0$ for all $m$ with $|m|\geq N+1$.
The following finiteness result will be important for us.

\begin{Theorem} If $M$ has negative curvature every solution
$u$ of $X(u)+(A+\Phi)u=0$ has finite degree.
\label{theorem:finite}
\end{Theorem}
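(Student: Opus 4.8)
The plan is to study the Fourier decomposition $u=\sum_m u_m$ of a solution of $X(u)+(A+\Phi)u=0$ and show that the ``top'' coefficients must vanish by a curvature-driven vanishing argument à la Guillemin--Kazhdan. Writing $X=\eta_+ + \eta_-$ and twisting by $A$ as $\mu_+ = \eta_+ + A_1$, $\mu_- = \eta_- + A_{-1}$, the equation becomes $\mu_+(u) + \mu_-(u) + \Phi u = 0$. Projecting onto $\Omega_{m+1}$ gives the recursion
\[
\mu_+(u_m) + \mu_-(u_{m+2}) + (\Phi u)_{m+1} = 0,
\]
and since $\Phi=\Phi\circ\pi$ lies in $\Omega_0$ (it depends only on $x$), $(\Phi u)_{m+1}=\Phi u_{m+1}$. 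So the recursion is $\mu_+(u_m) + \mu_-(u_{m+2}) + \Phi u_{m+1} = 0$ for all $m\in\Z$. Suppose for contradiction that $u$ has infinitely many nonzero Fourier modes, say with arbitrarily large $m$; then for $m$ large the relevant term is governed by $\mu_+(u_m)$ balanced against $\mu_-(u_{m+2})$.

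The heart of the matter is a \emph{Carleman-type energy estimate}: in negative curvature one has, for $w\in\Omega_m$ with $m\geq 1$ (or $m$ large enough), an inequality of the shape $\|\mu_-(w)\|^2 \geq \|\mu_+(w)\|^2 + c\,m\,\|w\|^2$ (possibly with a correction absorbing the zeroth-order term $\Phi$), which is the twisted analogue of the Guillemin--Kazhdan estimate coming from the Pestov identity / commutator formula $[\eta_+,\eta_-] = \tfrac{i}{2} K V$ on $\Omega_m$, where $K$ is the Gauss curvature. First I would recall/establish this commutator identity in the twisted setting — the extra terms involve the curvature $F_A$ of the connection — and derive the corresponding inequality $\|\mu_-(w)\|^2 - \|\mu_+(w)\|^2 \geq (m\inf(-K) - C)\|w\|^2$ for a constant $C$ depending on $\|F_A\|_\infty$ and $\|\Phi\|_\infty$. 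Then, summing the recursion relations against $u_m$ and using $(\mu_+)^*=-\mu_-$, one gets a telescoping identity in $m$; feeding in the inequality forces $\sum_{m \geq N} \|u_m\|^2 = 0$ for $N$ large, i.e. all sufficiently high positive modes vanish. The same argument with the roles of $\mu_+,\mu_-$ and $\pm m$ swapped kills the sufficiently negative modes, giving finite degree.

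More concretely, I would pair $\mu_+(u_m) + \mu_-(u_{m+2}) + \Phi u_{m+1}=0$ with $u_{m+2}$ in $\Omega_{m+2}$ to obtain $\langle \mu_+(u_m), u_{m+2}\rangle + \|\mu_-(u_{m+2})\|^2 + \langle \Phi u_{m+1},u_{m+2}\rangle = 0$, and separately pair (the $m\to m-2$ shift of) the relation with $u_m$ to bring in $\|\mu_+(u_m)\|^2$; combining and using $\langle\mu_+(u_m),u_{m+2}\rangle = -\langle u_m, \mu_-(u_{m+2})\rangle$ lets one build the quantity $\sum_{m}(\|\mu_-(u_{m+2})\|^2 - \|\mu_+(u_{m+2})\|^2)$ plus controllable cross terms. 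Applying the curvature inequality mode by mode, the left side is bounded below by $\sum_m (m\,\delta - C)\|u_{m+2}\|^2$ with $\delta = \inf(-K)>0$, while Cauchy--Schwarz on the $\Phi$ cross terms contributes only $O(\sum\|u_m\|^2)$; for $m$ past a threshold $m_0 = m_0(\delta, C, \|\Phi\|_\infty)$ the coefficient $m\delta - C - O(1)$ is strictly positive, so the tail $\sum_{m>m_0}\|u_m\|^2$ must vanish.

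The main obstacle I anticipate is making the twisted Pestov/commutator estimate clean enough that the zeroth-order Higgs term and the curvature $F_A$ of the connection are genuinely absorbed as bounded perturbations, while the leading $m$-growth survives — in particular handling the cross terms between consecutive Fourier modes (which couple $u_m$ to $u_{m\pm 1}$ through $\Phi$ and to $u_{m\pm 2}$ through $\mu_\pm$) without them overwhelming the positive $m\delta\|u_m\|^2$ gain. This is a standard but delicate bookkeeping issue; the usual trick is to exploit that $\Phi u_{m+1}$ only links \emph{one} step in $m$, so a weighted sum $\sum_m \beta_m(\cdots)$ with slowly varying weights $\beta_m$, or simply Cauchy--Schwarz with a small parameter $\epsilon$ splitting $\|u_m\|\|u_{m+1}\| \leq \tfrac{\epsilon}{2}\|u_m\|^2 + \tfrac{1}{2\epsilon}\|u_{m+1}\|^2$, suffices once $m$ is large. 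A secondary point is ensuring the formal manipulations with infinite Fourier sums are legitimate: since $u\in C^\infty(SM)$, $\sum_m m^{2k}\|u_m\|^2 < \infty$ for every $k$, so all the series converge and rearrangements are justified.
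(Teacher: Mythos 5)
Your overall strategy is the right one and matches the paper's: decompose $u$ into Fourier modes, use the twisted Pestov/Guillemin--Kazhdan estimate on $\Omega_m$, feed the recursion $\mu_+(u_{m-1})+\mu_-(u_{m+1})+\Phi u_m=0$ into it, and show that all sufficiently high modes vanish because $c_m\to\infty$. Two small slips first: the inequality for $m\ge\ell$ should read $\|\mu_+ u_m\|^2\ge\|\mu_- u_m\|^2+c_m\|u_m\|^2$ (you have the roles of $\mu_\pm$ swapped for positive $m$); and the ``pairing with $u_{m+2}$'' step as written produces only $0=0$, since the recursion lives in $\Omega_{m+1}$ while $u_{m+2}\in\Omega_{m+2}$, so every inner product is zero --- what you actually want, as the paper does, is to take norm-squared of $\mu_-(u_{m+1})=-\mu_+(u_{m-1})-\Phi u_m$.

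The genuine gap is your treatment of the cross term $2\Re\langle\mu_+(u_{m-1}),\Phi u_m\rangle$. You propose to absorb it ``by Cauchy--Schwarz'' as $O(\sum\|u_m\|^2)$, but that is not true as stated: the cross term contains a derivative of $u$, and any Cauchy--Schwarz bound costs you a piece of $\|\mu_+(u_{m-1})\|^2$, which is itself of the same order as the quantity you are trying to control. If you try to trade it off using $\|\mu_+ u_{m-1}\|^2\ge c_{m-1}\|u_{m-1}\|^2$ with $c_m\sim m$, the resulting multiplicative losses are of size $1/\sqrt{c_m}$, whose sum diverges, so the recursion degenerates and you cannot conclude. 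This is exactly the point the paper flags as ``not entirely straightforward.'' The fix in the paper is structural, not a weighting trick: one first integrates by parts, $\Re\langle\mu_+(u_{m-1}),\Phi u_m\rangle=-\Re\langle u_{m-1},\mu_-(\Phi u_m)\rangle$, then expands $\mu_-(\Phi u_m)=\Phi\mu_-(u_m)+\bar{\partial}_A(\Phi)u_m$ and re-substitutes the recursion to write $\mu_-(u_m)=-\mu_+(u_{m-2})-\Phi u_{m-1}$. The term $\langle\Phi u_{m-1},\mu_+(u_{m-2})\rangle$ that appears is the same kind of cross term shifted down by one index, so upon summing it telescopes away, and what survives involves only the undifferentiated quantities $\|\Phi u_{m-1}\|^2$ and $\langle u_{m-1},\bar{\partial}_A(\Phi)u_m\rangle$ --- these genuinely are $O(\|u_{m-1}\|^2+\|u_m\|^2)$ and are then swamped by $c_m\|u_m\|^2$. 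This two-step ``integrate by parts, then resubstitute the recursion to telescope'' is the key idea you are missing; without it the argument does not close.
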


This theorem was proved in \cite[Theorem 5.1]{P} for the case $\Phi=0$
and the extension to include a Higgs field is not entirely straightforward
due to the fact that the commanding recurrence relation has a more complicated nature.

\begin{proof} We shall use the following equality proved in \cite[Corollary 4.4]{P}. Given $u\in C^{\infty}(SM,\M_{n}(\C))$ we have
\[|\mu_{+}u|^2=|\mu_{-}u|^2+\frac{i}{2}(\langle K\,V(u),u\rangle+\langle (\star F_{A})u,u\rangle),\]
where $K$ is the Gaussian curvature of the metric and $F_{A}$ is the curvature of $A$.
Hence for $u_m\in \Omega_{m}$ we have
\[|\mu_{+}u_{m}|^2=|\mu_{-}u_{m}|^2+\frac{1}{2}(\langle (i\star F_{A}-mK\,\mbox{\rm Id})u,u\rangle).\] 
Hence if $K<0$, there exist constants $c_{m}>0$ with $c_{m}\to\infty$
and a positive integer $\ell$ such that
\begin{equation}
|\mu_{+}u_{m}|^2\geq|\mu_{-}u_{m}|^2+c_{m}|u_{m}|^2
\label{eq:exp}
\end{equation}
for all $m\geq \ell$.
We know that for all $m\in \Z$
\begin{equation}
\mu_{+}(u_{m-1})+\mu_{-}(u_{m+1})+\Phi\,u_{m}=0.
\label{eq:recurrence}
\end{equation}
Using (\ref{eq:exp}) and (\ref{eq:recurrence}) we can write ($m\geq \ell-1$)
\begin{align*}
|\mu_{+}(u_{m+1})|^2&\geq |\mu_{-}(u_{m+1})|^2+c_{m+1}|u_{m+1}|^{2}\\
&=|\mu_{+}(u_{m-1})+\Phi\,u_{m}|^{2}+c_{m+1}|u_{m+1}|^2\\
&=|\mu_{+}(u_{m-1})|^{2}+2\Re\langle\mu_{+}(u_{m-1}),\Phi\,u_{m}\rangle+|\Phi\,u_{m}|^{2}+c_{m+1}|u_{m+1}|^2\\
&=|\mu_{+}(u_{m-1})|^{2}-2\Re\langle u_{m-1},\mu_{-}(\Phi\,u_{m})\rangle+|\Phi\,u_{m}|^{2}+c_{m+1}|u_{m+1}|^2.
\end{align*}
Let us compute $\mu_{-}(\Phi\,u_{m})$:
\begin{align*}
\mu_{-}(\Phi\,u_{m})&=\eta_{-}(\Phi\,u_{m})+A_{-1}\Phi\,u_{m}\\
&=\eta_{-}(\Phi)u_{m}+\Phi\eta_{-}(u_{m})+A_{-1}\Phi\,u_{m}\\
&=\Phi\mu_{-}(u_{m})+\bar{\partial}_{A}(\Phi)u_{m}\\
&=\Phi(-\mu_{+}(u_{m-2})-\Phi\,u_{m-1})+\bar{\partial}_{A}(\Phi)u_{m},
\end{align*}
where $\bar{\partial}_{A}(\Phi):=\eta_{-}(\Phi)+[A_{-1},\Phi]$.
Therefore using that $\Phi^{*}=-\Phi$ we have
\begin{align*}
\Re\langle u_{m-1},\mu_{-}(\Phi\,u_{m})\rangle&=\Re\langle u_{m-1},-\Phi\mu_{+}(u_{m-2})-\Phi^{2}\,u_{m-1}+\bar{\partial}_{A}(\Phi)u_{m}\rangle\\
&=\Re\langle \Phi\,u_{m-1},\mu_{+}(u_{m-2})\rangle+|\Phi\,u_{m-1}|^{2}+\Re\langle u_{m-1},\bar{\partial}_{A}(\Phi)u_{m}\rangle\\
&=-\Re\langle u_{m-2},\mu_{-}(\Phi\,u_{m-1})\rangle+|\Phi\,u_{m-1}|^{2}+\Re\langle u_{m-1},\bar{\partial}_{A}(\Phi)u_{m}\rangle
\end{align*}
and thus
\[\Re\langle u_{m-1},\mu_{-}(\Phi\,u_{m})\rangle+\Re\langle u_{m-2},\mu_{-}(\Phi\,u_{m-1})\rangle=|\Phi\,u_{m-1}|^{2}+\Re\langle u_{m-1},\bar{\partial}_{A}(\Phi)u_{m}\rangle.\]
If we now set
\[a_{m}:=|\mu_{+}(u_{m})|^{2}+|\mu_{+}(u_{m-1})|^{2}\]
then we derive ($m\geq \ell+1$)
\begin{align*}
a_{m+1}&\geq a_{m-1}+|\Phi\,u_{m}|^{2}-|\Phi\,u_{m-1}|^{2}+c_{m+1}|u_{m+1}|^{2}+
c_{m}|u_{m}|^{2}-2\Re\langle u_{m-1},\bar{\partial}_{A}(\Phi)u_{m}\rangle\\
&\geq a_{m-1}+|\Phi\,u_{m}|^{2}-|\Phi\,u_{m-1}|^{2}+c_{m+1}|u_{m+1}|^{2}+
c_{m}|u_{m}|^{2}-|u_{m-1}|^{2}-|\bar{\partial}_{A}(\Phi)u_{m}|^{2}\\
&\geq a_{m-1}-|\Phi\,u_{m-1}|^{2}+c_{m+1}|u_{m+1}|^{2}+
c_{m}|u_{m}|^{2}-|u_{m-1}|^{2}-|\bar{\partial}_{A}(\Phi)u_{m}|^{2}.
\end{align*}
Since $M$ is compact there exist positive constants $B$ and $C$ such that
\begin{align*}
&|\Phi\,f|^{2}\leq (B-1)|f|^{2}\\
&|\bar{\partial}_{A}(\Phi)\,f|^{2}\leq C|f|^{2}
\end{align*}
for any $f\in C^{\infty}(SM,\M_{n}(\C))$. Therefore
\[a_{m+1}\geq a_{m-1}+r_{m}\]
where
\[r_{m}:=-B|u_{m-1}|^{2}+c_{m+1}|u_{m+1}|^{2}+(c_{m}-C)|u_{m}|^{2}.\]
Now choose a positive integer $N_{0}\geq \ell$ large enough so that for $m\geq N_{0}$ we have
\[c_{m}>\max\{B,C\}.\]
Let $m=N+1+2k$, where $k$ is a non-negative integer and $N$ is an integer with $N\geq N_0$. Note that from the definition
of $r_m$ and our choice of $N$ we have
\[r_{m}+r_{m-2}+\cdots+r_{N+1}\geq -B|u_{N}|^2.\]
Thus
\[a_{m+1}\geq a_{N}+r_{m}+r_{m-2}+\cdots+r_{N+1}\geq a_{N}-B|u_{N}|^2.\]
From the definition of $a_{m}$ and (\ref{eq:exp}) we know that
$a_{N}\geq c_{N}|u_{N}|^{2}$ and hence
\[a_{m+1}\geq (c_{N}-B)|u_{N}|^{2}.\]
Since the function $u$ is smooth, $\mu_{+}(u_m)$ must tend to zero
in the $L_{2}$-topology as $m\to\infty$. Hence $a_{m+1}\to 0$
as $k\to\infty$ which in turns implies that $u_{N}=0$ for any $N\geq N_0$.
A similar argument shows that $u_{m}=0$ for all $m$ sufficiently large and negative thus concluding that
$u$ has finite degree as desired.

\end{proof}

An inspection of the proof above gives the following:

\begin{Corollary} Let $M$ be a closed oriented surface of negative curvature
and let $(A,\Phi)$ be a transparent pair, where $A$ is a flat connection. Then $\Phi\equiv 0$ and $A$ is gauge equivalent to the trivial connection.
\end{Corollary}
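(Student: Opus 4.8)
The plan is to combine the Livsic theorem, the finiteness result (Theorem~\ref{theorem:finite}), and the curvature identity used in its proof, now specialized to a flat connection. First I would note that since $M$ is negatively curved the geodesic flow $\phi_t$ on $SM$ is transitive Anosov, so Theorem~\ref{livsic} applies: a transparent pair $(A,\Phi)$ is cohomologically trivial, and there is a smooth $u:SM\to G$ with $X(u)+(A+\Phi)u=0$. By Theorem~\ref{theorem:finite} this $u$ has finite degree; call it $N$ and write $u=\sum_{m=-N}^{N}u_m$ with $u_m\in\Omega_m$. Projecting $X(u)+(A+\Phi)u=0$ onto $H_m$ gives the recurrence (\ref{eq:recurrence}), namely $\mu_+(u_{m-1})+\mu_-(u_{m+1})+\Phi u_m=0$ for every $m\in\Z$.

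Next I would rule out $N\geq 1$. Taking $m=N+1$ in (\ref{eq:recurrence}) and using $u_{N+1}=u_{N+2}=0$ yields $\mu_+(u_N)=0$. On the other hand, the identity from \cite[Corollary 4.4]{P} quoted in the proof of Theorem~\ref{theorem:finite}, with $F_A=0$ since $A$ is flat, gives for $w\in\Omega_m$
\[|\mu_+ w|^2=|\mu_- w|^2-\tfrac{m}{2}\langle K\,w,w\rangle .\]
Since $K<0$ on $M$, for $m\geq 1$ the last term is bounded below by $\tfrac{m}{2}(\min_M|K|)\,|w|^2$. Applying this with $w=u_N$ and $m=N\geq 1$,
\[0=|\mu_+ u_N|^2\geq|\mu_- u_N|^2+\tfrac{N}{2}(\min_M|K|)\,|u_N|^2 ,\]
which forces $u_N=0$, contradicting the definition of the degree. (The symmetric computation with $m=-N$, using $\mu_-(u_{-N})=0$, shows the negative modes need no separate treatment.) Hence $N=0$.

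Finally, with $N=0$ the function $u=u_0$ is invariant under the vertical flow $\rho_t$ and so descends to a smooth map $u_0:M\to G$. The recurrence (\ref{eq:recurrence}) at $m=0$ reads $\Phi u_0=0$ (the terms $\mu_\pm(u_{\mp1})$ vanish), and since $u_0$ is everywhere invertible this gives $\Phi\equiv 0$. The recurrences at $m=\pm1$ give $\mu_+(u_0)=0$ and $\mu_-(u_0)=0$, which together are equivalent to $X(u_0)+Au_0=0$; since $u_0$ is a function on $M$, unravelling this via the local formulas (\ref{eq:eta}) and (\ref{eq:mu}) shows it is exactly $du_0+Au_0=0$, i.e. $du_0=-Au_0$. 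Taking the gauge transformation $r:=u_0$ then gives $r^{-1}dr+r^{-1}Ar=u_0^{-1}(-Au_0)+u_0^{-1}Au_0=0$, so $A$ is gauge equivalent to the trivial connection, and $r^{-1}\Phi r=0$ as well.

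The only substantial ingredient is Theorem~\ref{theorem:finite}: once finiteness of the degree is available, the corollary is a short consequence of the flat specialization of the above identity. The points needing a little care are the descent of $u_0$ to $M$ and the identification of the first-order conditions $\mu_\pm(u_0)=0$ with $du_0=-Au_0$, both routine from the local coordinate formulas.
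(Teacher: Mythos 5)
Your proof is correct and follows essentially the same route as the paper's: specialize the curvature identity from \cite[Corollary 4.4]{P} to the flat case to see that $\mu_+$ is injective on $\Omega_m$ for $m\geq 1$, combine with Theorem~\ref{theorem:finite} and $\mu_+(u_N)=\mu_-(u_{-N})=0$ to force $u=u_0$, and then read off $\Phi\equiv 0$ and $du_0+Au_0=0$. The only cosmetic difference is that you derive a direct contradiction with the minimality of the degree $N$, whereas the paper phrases the same step as an induction; both are fine, and your extra explicit steps (invoking Livsic, descending $u_0$ to $M$, and spelling out the gauge transformation $r=u_0$) are all sound.
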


\begin{proof} Indeed, if $A$ is flat and $K<0$, the equality
\[|\mu_{+}u|^2=|\mu_{-}u|^2+\frac{i}{2}(\langle K\,V(u),u\rangle),\]
implies that $\mu_{+}$ is injective on $\Omega_{n}$ for $n\geq 1$ (and $\mu_{-}$ is injective on $\Omega_{n}$ for $n\leq -1$).
Any $u:SM\to U(n)$ solving $X(u)+(A+\Phi)u=0$ must have a finite Fourier series
and thus if we write $u=\sum_{-N}^{N}u_{m}$, then $\mu_{+}(u_{N})=\mu_{-}(u_{-N})=0$ which in turn implies $u_{N}=u_{-N}=0$ if $N\geq 1$. Arguing inductively, it follows that $u=u_{0}$ and $X(u)+(A+\Phi)u=0$ may be rewritten as $du_{0}+(A+\Phi)u_{0}=0$. This clearly implies $\Phi\equiv 0$ and $A$
gauge equivalent to the trivial connection.

\end{proof}

\section{A general correspondence for pairs}

The purpose of this section is to describe a general classification result for cohomologically
trivial pairs on any surface similar to \cite[Theorem 3.1]{P1}. This correspondence is important for our approach since it will help us expose the relation of the problem at hand with the underlying complex structure of $M$. 
In what follows we assume that $G$ is $SU(n)$ or $SO(n)$ 
with lie algebra $\mathfrak g$.

Let $M$ be an oriented surface with a Riemannian metric and let
$SM$ be its unit tangent bundle. 
Let
\[\cA:=\{A:SM\to \mathfrak{g}:\;\;V^{2}(A)=-A\}.\]
The set $\cA$ is identified naturally (after fixing a metric) with 
$\Omega^{1}(M,\mathfrak{g})$. 
An element in $\Omega^{1}(M,\mathfrak{g})$ is a smooth map
$A:TM\to \mathfrak{g}$ such that for each $x\in M$ it is linear 
in $v\in T_{x}M$ and the bijection with $\cA$ is obtained by restriction
 to $SM$.
To see that this is a bijection note that a function
$A:SM\to\mathfrak{g}$ satisfying $V^{2}(A)+A=0$ is a function that locally (in isothermal coordinates)
can be written as $A(x,y,\theta)=a(x,y)\cos\theta+b(x,y)\sin\theta$ and we recover the element in $\Omega^1(M,\mathfrak{g})$ locally by setting
\[A=A_{x}dx+A_{y}dy\]
where $A_{x}=ae^\lambda$ and $A_{y}=be^\lambda$. It is straightforward to check that this defines a global element in $\Omega^1(M,\mathfrak{g})$.
Under this identification the star operator $\star: \Omega^1(M,\mathfrak{g})\to \Omega^1(M,\mathfrak{g})$ is just $-V:\cA\to\cA$.

A Higgs field $\Phi:M\to\mathfrak{g}$ can also be regarded as a function
$\Phi:SM\to\g$ such that $V(\Phi)=0$ and we denote this set as
$C^\infty_{0}(SM,\g)$.

Recall from the introduction that a pair $(A,\Phi)\in \cA\times C_{0}^{\infty}(SM,\g)$ is said to be cohomologically trivial
if there exists a smooth $u:SM\to G$ such that $C(x,v,t)=u(\phi_{t}(x,v))u^{-1}(x,v)$.
Differentiating with respect to $t$ and setting $t=0$ this is equivalent to
\begin{equation}
X(u)+(A+\Phi)u=0.
\label{eq:trans}
\end{equation}

Let $\P_{0}$ be the set of all cohomologically trivial pairs, that is,
the set of all $(A,\Phi)\in \mathcal A\times C_{0}^{\infty}(SM,\g)$ such that there exists $u:SM\to G$
for which (\ref{eq:trans}) holds.

Given a vector field $W$ in $SM$, let $G_{W}$ be the set
of all $u:SM\to G$ such that $W(u)=0$, i.e. first integrals
of $W$. Note that $G_{V}$ is nothing but the group
of gauge transformations of the trivial bundle $M\times \mathbb F^n$, where
$\mathbb F=\re,\C$.

We wish to understand $\P_{0}/G_{V}$. Now let $\cH_{0}$ be the set
of all pairs $f,\Psi:SM\to\g$ such that
\begin{align*}
H(f)+VX(f)-[X(f),f]+\Psi&=0,\\
V(\Psi)+[f,\Psi]&=0\\
\end{align*}
and there is $u:SM\to G$ such that
$f=u^{-1}V(u)$. It is easy to check that
$G_{X}$ acts on $\cH_{0}$ by 
\[(f,\Psi)\mapsto (a^{-1}f\,a+a^{-1}V(a),\;a^{-1}\Psi a).\]
where $a\in G_{X}$.

\begin{Theorem} There is a 1-1 correspondence between
$\P_{0}/G_{V}$ and $\cH_{0}/G_{X}$.
\label{theorem:1-1}
\end{Theorem}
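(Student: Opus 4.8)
The plan is to construct explicit maps in both directions and check that they descend to the quotients and are mutually inverse. Starting from a cohomologically trivial pair $(A,\Phi)\in\P_{0}$, pick a trivializing function $u:SM\to G$, so that $X(u)+(A+\Phi)u=0$, and set $f:=u^{-1}V(u)$ and $\Psi:=-u^{-1}H(u)-u^{-1}X(V(u))+\cdots$; more precisely, $\Psi$ should be chosen so that the first equation of $\cH_{0}$ holds, which forces $\Psi=-H(f)-VX(f)+[X(f),f]$. The first thing I would do is verify that this $(f,\Psi)$ actually lies in $\cH_{0}$: the condition $f=u^{-1}V(u)$ is automatic, and the two PDEs defining $\cH_{0}$ must be derived by differentiating $X(u)+(A+\Phi)u=0$ appropriately. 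Concretely, one should compute $V$ and $H$ of the relation $A+\Phi=-X(u)u^{-1}$ (rewritten in the adjoint/left form) and use the structure equations of the canonical framing, namely $[V,X]=H$, $[V,H]=-X$, $[X,H]=KV$ where $K$ is the Gaussian curvature. The second PDE $V(\Psi)+[f,\Psi]=0$ should come out as the integrability/compatibility condition once one remembers that $\Phi$ satisfies $V(\Phi)=0$ and $A$ satisfies $V^2(A)=-A$: tracking the $V$-dependence of each term is the mechanism that separates the ``$A$ part'' from the ``$\Phi$ part''.

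In the reverse direction, given $(f,\Psi)\in\cH_{0}$ with $f=u^{-1}V(u)$ for some $u:SM\to G$, I would define $A+\Phi:=-X(u)u^{-1}$ and then \emph{split} this $\g$-valued function on $SM$ into its $A$-component and $\Phi$-component using the vertical Fourier grading: $\Phi$ is the part killed by $V$ (equivalently, the $H_{0}$-component, interpreted correctly), and $A$ is the part satisfying $V^{2}(A)=-A$ (the $H_{1}\oplus H_{-1}$ part). The content to check is that $-X(u)u^{-1}$ has \emph{no} Fourier components of order $|m|\geq 2$ in the vertical variable; this is exactly where the two equations defining $\cH_{0}$ are used — they are precisely the vanishing of the higher vertical harmonics of $V(X(u)u^{-1})$ and $V^{2}(X(u)u^{-1})+X(u)u^{-1}$ rewritten in terms of $f=u^{-1}V(u)$. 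Once $A\in\cA$ and $\Phi\in C_{0}^{\infty}(SM,\g)$ are extracted, $u$ trivializes the cocycle of $(A,\Phi)$ by construction, so $(A,\Phi)\in\P_{0}$.

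Finally I would check well-definedness and bijectivity at the level of quotients. If $u$ is replaced by another trivializing function $u'$ for the same pair, then $u'=u\,w^{-1}$ with $V$ \emph{not} necessarily killing $w$; rather the ambiguity in trivializing functions is right-multiplication by $G_{X}$ (first integrals of $X$, since transitivity is not assumed here one works with the general statement), and one computes that $f=u^{-1}V(u)$ then transforms by $(f,\Psi)\mapsto(a^{-1}fa+a^{-1}V(a),\,a^{-1}\Psi a)$ with $a\in G_{X}$ — which is exactly the stated $G_{X}$-action. Conversely, applying a gauge transformation $r\in G_{V}$ to $(A,\Phi)$ replaces $u$ by $r^{-1}u$ (as noted in the introduction), which leaves $f=u^{-1}V(u)$ and $\Psi$ unchanged because $V(r^{-1})=-r^{-1}V(r)r^{-1}=0$. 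Thus the two constructions descend to maps $\P_{0}/G_{V}\to\cH_{0}/G_{X}$ and back, and tracing through the definitions shows they are inverse to each other. The main obstacle I anticipate is the first bookkeeping step: correctly deriving the two PDEs of $\cH_{0}$ from $X(u)+(A+\Phi)u=0$ by commuting $V$ and $H$ through, and in particular confirming that the curvature term $KV$ arising from $[X,H]$ does \emph{not} appear in the final equations — it should cancel or be absorbed, since $\cH_{0}$ as defined is curvature-free. Getting the signs and the placement of brackets right in $\Psi=-H(f)-VX(f)+[X(f),f]$ is the delicate computational heart of the argument; everything else is formal.
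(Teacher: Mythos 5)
Your scaffolding is right — construct maps in both directions, check they descend to the quotients, and check they are mutually inverse — and your bookkeeping on the $G_{X}$ and $G_{V}$ actions matches the paper. But the "delicate computational heart" you defer is exactly where the proof lives, and you have not identified the one observation that collapses it. The paper does not define $\Psi$ by the first PDE of $\cH_{0}$; it sets $\Psi:=u^{-1}\Phi\,u$. With that choice, the second equation $V(\Psi)+[f,\Psi]=0$ falls out in one line from $V(\Phi)=0$ and $f=u^{-1}V(u)$ (just expand $V(u\Psi u^{-1})=0$), and the first equation is derived conceptually rather than by brute-force commuting: one introduces the connection $B:=u^{-1}du+u^{-1}\pi^{*}A\,u$ on $SM$, notes $B(V)=f$ and — from the transport equation — $B(X)=-\Psi$, and then uses that $B$ is gauge equivalent to a pullback connection, so its curvature annihilates $V$: $F_{B}(X,V)=F_{B}(H,V)=0$. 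Unpacking these two curvature identities yields $B(H)=-X(f)$ and then $H(f)+VX(f)-[X(f),f]+\Psi=0$. In your scheme, by contrast, you define $\Psi$ by the first equation and still have to prove the second, which amounts to proving $\Psi=u^{-1}\Phi u$ anyway — so the work has not been avoided, only relocated and left undone.

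Two smaller points. First, your concern that the $[X,H]=KV$ commutator must "cancel" is a red herring: the paper's derivation only ever uses $[V,X]=-H$ and $[H,V]=-X$, so the Gaussian curvature never enters; there is nothing to cancel. Second, in the backward direction the paper does not Fourier-split $-X(u)u^{-1}$; it uses the given $\Psi$ directly, setting $\Phi:=u\Psi u^{-1}$ and $A:=-X(u)u^{-1}-\Phi$, and then checks $V(\Phi)=0$ (easy, from equation (\ref{eq:k2}) and $fu=V(u)$) and $V^{2}(A)=-A$ (deferred to a calculation in \cite{P}). Your Fourier-grading route should land in the same place, since showing $V^{2}(A)=-A$ and $V(\Phi)=0$ is exactly showing $-X(u)u^{-1}$ has no vertical harmonics of order $\geq 2$; but your plan discards the given $\Psi$ entirely and reconstructs it, which works only because equation (\ref{eq:k1}) makes $\Psi$ redundant given $f$ — a fact worth stating rather than using tacitly. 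In short: correct architecture, but the proposal skips the two load-bearing identities ($\Psi=u^{-1}\Phi u$ and $F_{B}(\cdot,V)=0$) that make the PDE verifications tractable, and without them the proof is not yet a proof.
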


\begin{proof} Forward direction: a cohomologically trivial pair $(A,\Phi)$
comes with a $u$ such that $X(u)+(A+\Phi)u=0$. Let us set
$f:=u^{-1}V(u)$ and $\Psi:=u^{-1}\Phi u$. We need to check that
$(f,\Psi)\in \cH_{0}$, i.e., the pair $(f,\Psi)$ satisfies the PDEs:
\begin{align}
H(f)+VX(f)-[X(f),f]+\Psi&=0,\label{eq:k1}\\
V(\Psi)+[f,\Psi]&=0 \label{eq:k2}
\end{align}
Using $u$ we may
define a connection on $SM$ gauge equivalent to $\pi^*A$ by setting
$B:=u^{-1}du+u^{-1}\pi^*A u$, where $\pi:SM\to M$ is the foot-point projection.
Using $X(u)+(A+\Phi)u=0$ we derive
\begin{equation}
B(X)=-u^{-1}\Phi u=-\Psi.
\label{eq:b1}
\end{equation}
Note also that $B(V)=f$.
Since $V(\Phi)=0$, the equation $V(u\Psi u^{-1})=0$ gives
\[V(u)\Psi u^{-1}+uV(\Psi)u^{-1}+u\Psi V(u^{-1})=0.\]
Using that $-f=V(u^{-1})u$ we obtain
\[V(\Psi)+[f,\Psi]=0\]
which is (\ref{eq:k2}). To derive (\ref{eq:k1}) we first note that
since $\pi^*A$ is the pull-back of a connection
on $M$, the curvature $F_{B}$ of $B$ must vanish when one of the entries
is the vertical vector field $V$.
Using that $F_{B}=dB+B\wedge B$ and (\ref{eq:b1}) we compute
\[0=F_{B}(X,V)=dB(X,V)+[B(X),B(V)]=dB(X,V)-[\Psi,f].\]
But
\[dB(X,V)=XB(V)-VB(X)-B([X,V])=XB(V)+V(\Psi)+B(H),\]
and combined with (\ref{eq:k2}) this gives
\begin{equation}
B(H)=-XB(V)=-X(f).
\label{eq:cuXV}
\end{equation}
We also compute
\[0=F_{B}(H,V)=dB(H,V)+[B(H),B(V)],\]
and
\[dB(H,V)=HB(V)-VB(H)-B([H,V])=HB(V)-VB(H)-B(X),\]
hence
\begin{equation}
HB(V)-VB(H)+[B(H),B(V)]=B(X).
\label{eq:cuHV}
\end{equation}
Combining (\ref{eq:cuXV}) and (\ref{eq:cuHV}) gives:
\[H(f)+VX(f)-[X(f),f]+\Psi=0\]
which is (\ref{eq:k1}).

Backward direction: Given a pair $(f,\Psi)$ with $fu=V(u)$ satisfying (\ref{eq:k1}) and (\ref{eq:k2}) set $\Phi:=u\Psi u^{-1}$ and 
$A:=-X(u)u^{-1}-\Phi$.
We need to check that $(A,\Phi)\in \cA_{0}\times C^{\infty}_{0}(SM,\g)$, 
i.e. $V^{2}(A)=-A$ and $V(\Phi)=0$. Checking that $V(\Phi)=0$ is easy, simply
use $V(\Psi)+[f,\Psi]=0$ and $fu=V(u)$.
After this, checking that $V^{2}(A)=-A$ is completely analogous to a calculation done in the proof of \cite[Theorem B]{P} and so we omit the details.

Now there are two ambiguities here. Going forward, we may change
$u$ as long as we solve $X(u)+(A+\Phi)u=0$. This changes $(f,\Psi)$ by the action
of $G_{X}$. Going backwards we may change $u$ as long as $fu=V(u)$,
this changes $(A,\Phi)$ by a gauge transformation, i.e. an element
in $G_{V}$.

\end{proof}

\begin{Remark}{\rm 
Note that if the geodesic flow is transitive (i.e. there is a dense orbit)
the only first integrals are the constants and thus $G_{X}=U(n)$
acts simply by conjugation. If $M$ is closed and of negative curvature,
the geodesic flow is Anosov and therefore transitive.

}

\end{Remark}

\begin{Remark}{\rm There is an alternative way of writing equations
(\ref{eq:cuXV}) and (\ref{eq:cuHV}) which reveals a bit of their structure.

A connection on $SM$ is determined as long as we specify the values
of a $\g$-valued 1-form $\tau$ on $SM$. Let $\tau$ be given by
\[\tau(X)=0,\;\;\;\;\tau(H)=-X(f),\;\;\;\;\tau(V)=f.\]
We compute the curvature $F_{\tau}$ of $\tau$ at $(H,V)$ and we find
\[F_{\tau}(H,V)=H(f)+VX(f)-[X(f),f]\]
thus using (\ref{eq:k1}) we derive
\begin{equation}
\Psi=-F_{\tau}(H,V).
\label{eq:k'1}
\end{equation}
Recall that a connection induces a covariant derivative on endomorphisms and for $\tau$ we denote it by $D^{\tau}$. Hence we may write (\ref{eq:k2}) as
\begin{equation} 
D_{V}^{\tau}(\Psi)=0.
\label{eq:k'2}
\end{equation}
Thus equations (\ref{eq:k'1}) and (\ref{eq:k'2}) can be seen as the master equations for pairs. In fact there is obviously just one equation for $f$:
\begin{equation}
D_{V}^{\tau}(F_{\tau}(H,V))=0,
\label{eq:k'3}
\end{equation}
and once $f$ is found we obtain $\Psi$ from (\ref{eq:k'1}).

}

\end{Remark}

\section{The B\"acklund transformation for $SO(3)$-pairs}
\label{bt}
In this section we restrict to the case in which the structure group is
$SO(3)$.

Suppose there is a smooth map $b:SM\to SO(3)$ such that
if we let $f:=b^{-1}V(b)$ and
\[-\Psi:=H(f)+VX(f)-[X(f),f],\]
then $\Psi$ and $f$ are related by the PDE:
\begin{equation}
V(\Psi)+[f,\Psi]=0.
\label{mypde}
\end{equation}
Then, by Theorem \ref{theorem:1-1}, the pair $(f,\Psi)$ determines a cohomologically trivial pair $(A,\Phi)$
with $A+\Phi:=-X(b)b^{-1}$ and $\Phi=b\Psi b^{-1}$.
Using that $[V,X]=H$ we derive:
\begin{align*}
V(A)b=&-VX(b)-X(b)V(b^{-1})b\\
=&-XV(b)-H(b)+X(b)f\\
=&-H(b)-bX(f)\\
\end{align*}
and thus
\begin{equation}
-\star A=V(A)=-bX(f)b^{-1}-H(b)b^{-1}.
\label{eq:useful}
\end{equation}

Now suppose we are given a smooth function $g:M\to\s$ with $g^3+g=0$. By Corollary \ref{cor:existencea} there exists
a smooth $a:M\to SO(3)$ such that $ag=V(a)$. In what follows we shall assume that $g$ is not identically
zero. The equality $g^3+g=0$ means that $g$ has norm one with respect to the canonical inner product 
$\langle \cdot,\cdot\rangle$ in $\s$ defined by $\langle g,h\rangle=\text{trace}(gh^{t})/2$.

Let us set $u:=ab:SM\to SO(3)$, $F:=u^{-1}V(u)=b^{-1}g\,b+f$ and
\[-\Lambda:=H(F)+VX(F)-[X(F),F].\]

\medskip

\noindent{\bf Question.} When does $(F,\Lambda)$ satisfy (\ref{mypde})?

\medskip

If it does, then it defines (via Theorem \ref{theorem:1-1}) a new cohomologically trivial pair given
by
\begin{align*}
A_{g}+\Phi_{g}&=-X(ab)(ab)^{-1}=-X(a)a^{-1}+a(A+\Phi)a^{-1},\\
\Phi_{g}&=(ab)\Lambda (ab)^{-1},\\
\end{align*}
where $(A,\Phi)$ is the cohomologically trivial pair associated to
$(f,\Psi)$. 

Recall that the connection $A$ defines a covariant derivative $d_{A}g=dg+[A,g]$.

\begin{Lemma} $(F,\Lambda)$ satisfies (\ref{mypde}) if and only if
\begin{equation}
-\star d_{A}g=[d_{A}g,g].
\label{gmero}
\end{equation}
\label{lemma:gmero}
\end{Lemma}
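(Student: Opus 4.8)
The plan is to compute $-\Lambda = H(F)+VX(F)-[X(F),F]$ directly from the formula $F = b^{-1}g\,b + f$ and compare with $-\Psi = H(f)+VX(f)-[X(f),f]$, so that $\Lambda - \Psi$ is expressed purely in terms of $g$, $b$ and their derivatives. Since $(f,\Psi)$ already satisfies \eqref{mypde} by hypothesis, the condition that $(F,\Lambda)$ satisfies \eqref{mypde} will reduce to a PDE involving only $g$ and the connection $A$; the claim is that this PDE is exactly \eqref{gmero}. First I would introduce the shorthand $\tilde g := b^{-1}g\,b$ and use $F - f = \tilde g$, $V(b) = bf$, $[V,X]=H$, together with $ag = V(a)$ (hence $V(\tilde g) = b^{-1}(V(g) - [g, bfb^{-1}]\cdot\text{(conjugated)})b$, carefully tracking the conjugation). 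A cleaner route is to pull everything back by $b$: set $\hat F := b F b^{-1}$ etc., or better, recognize that the quantity $H(F)+VX(F)-[X(F),F]$ transforms under the substitution $f \mapsto b^{-1}f_0 b + f$ in a controlled way because it is (up to sign) the $(H,V)$-component of the curvature of the connection $\tau_F$ with $\tau_F(X)=0$, $\tau_F(H)=-X(F)$, $\tau_F(V)=F$, as noted in the Remark after Theorem \ref{theorem:1-1}.

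The key computational step is to relate $X(F)$ and $H(F)$ to $d_A g$. Using $A+\Phi = -X(b)b^{-1}$ and formula \eqref{eq:useful}, namely $V(A) = -bX(f)b^{-1} - H(b)b^{-1}$, I would express $b^{-1}X(b)$ and $b^{-1}H(b)$ in terms of $A$, $\Phi$, $\Psi$, $f$ and the curvature data. Then $X(\tilde g) = X(b^{-1}g\,b) = [b^{-1}g\,b,\, b^{-1}X(b)] + b^{-1}X(g)\,b$, and since $g$ depends only on the base point, $X(g)$ and $H(g)$ are the directional derivatives of $g$ along the geodesic and horizontal directions, which package into $dg$ evaluated on $X$ and $H$. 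Conjugating by $b$ converts $A$ into the trivialized connection $B = b^{-1}db + b^{-1}\pi^*A\,b$ with $B(X) = -\Psi$, $B(V) = f$, $B(H) = -X(f)$ (this is exactly \eqref{eq:b1}, \eqref{eq:cuXV}), so the combination that appears is precisely $b^{-1}(d_A g)\,b$ evaluated against $X$ and $H$. The upshot is that $V(\Lambda) + [F,\Lambda] - \big(V(\Psi)+[f,\Psi]\big)$ equals the conjugate by $b^{-1}$ of an expression built from $d_A g$, $\star d_A g = -V(\text{the corresponding }\cA\text{-element})$, and $[d_A g, g]$; the left summand vanishes by hypothesis, and demanding the whole thing vanish gives $-\star d_A g = [d_A g, g]$.

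The main obstacle I anticipate is the bookkeeping of conjugations and the vertical-derivative identities: one must repeatedly use $V(b) = bf$, $V(a) = ag$, the fact that $g$ and $A$ are pulled back from $M$ (so $V(g)=0$, $V^2(A)=-A$), and the Jacobi-type identities $[V,X]=H$, $[V,H]=-X$, $[X,H]=KV$ (with $K$ the curvature) to reorganize third-order derivatives of $g$. In particular, to even make sense of \eqref{gmero} on $SM$ one uses the identification from Section \ref{prelim}: $d_A g \in \Omega^1(M,\s)$ corresponds to a function on $SM$ killed by $V^2 + 1$, and $\star$ corresponds to $-V$; so \eqref{gmero} read on $SM$ becomes $V(\widehat{d_A g}) = [\widehat{d_A g}, g]$ for the appropriate function $\widehat{d_A g}$, and this is the form that will drop out of the calculation. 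Once the substitution is organized — I would do it by computing $\Lambda + F_{\tau_f}(H,V)$-type differences term by term, grouping by Fourier degree in $\theta$ — the equivalence is forced, with no inequality or analytic input needed, only the algebra of the framing $\{X,H,V\}$ and the defining relations of $a$ and $b$.
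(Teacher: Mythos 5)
Your plan — compute $-\Lambda$ directly from $F=b^{-1}gb+f$, use $A+\Phi=-X(b)b^{-1}$ and \eqref{eq:useful} to express the $b$-derivatives, conjugate through $b$ to bring in $d_Ag$ via the trivialized connection, and compare against the $(f,\Psi)$ relation — is the same route the paper takes, and all of these ingredients are indeed used. So the framework is right. But as written the proposal has a genuine gap: it does not explain, or even flag, the one algebraic step that makes the lemma work, namely why the resulting condition involves only $A$ and $g$ and not $\Phi$.

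Carrying out your computation as described, one finds $\Lambda=-b^{-1}Tb$ with $T=[d_Ag,g]+[[\Phi,g],g]+2\star d_Ag+\Phi$, and then (using $V(b)=bf$) the condition $V(\Lambda)+[F,\Lambda]=0$ is equivalent to $V(T)+[g,T]=0$. At this stage $T$ visibly depends on $\Phi$, so your claim that the difference "equals the conjugate by $b^{-1}$ of an expression built from $d_Ag$, $\star d_Ag$, $[d_Ag,g]$" is not yet true, and the reduction to a $\Phi$-free equation is precisely what needs to be argued. The paper gets there by using the $\s$ Lie-algebra identity $[a,[b,c]]=b\langle a,c\rangle-c\langle a,b\rangle$ together with $|g|=1$ (so $g^3+g=0$): this gives $[[\Phi,g],g]=-\Phi+g\langle g,\Phi\rangle$, collapsing the $\Phi$-terms in $T$ to $g\langle g,\Phi\rangle$, and then $V(g\langle g,\Phi\rangle)=0$ and $[g,g\langle g,\Phi\rangle]=0$ make $\Phi$ drop out of $V(T)+[g,T]$ entirely. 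Without this step the computation stalls. A secondary inaccuracy: you invoke $[V,H]=-X$, $[X,H]=KV$ and "third-order derivatives of $g$," but only $[V,X]=H$ (through \eqref{eq:useful}) and first derivatives of $g$ actually enter; and the suggestion to organize by Fourier degree in $\theta$ is not how the cancellation works — it is the pointwise $\s$-algebra that does the job.
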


\begin{proof}

Starting  with $F=b^{-1}g\,b+f$
and using that $A+\Phi=-X(b)b^{-1}=bX(b^{-1})$ we compute
\[X(F)=b^{-1}\left([A+\Phi,g]+X(g)\right)b+X(f).\]
Similarly, using $H(b)=-(V(A))b-bX(f)$ (cf. (\ref{eq:useful})) we find
\[H(F)=b^{-1}\left([V(A),g]+H(g)\right)b+[X(f), b^{-1}g\,b]+H(f).\]
Now we compute $VX(F)$; here we use that $V(g)=0$.
We obtain
\[VX(F)=[b^{-1}([A+\Phi,g]+X(g))b,f]+b^{-1}\left([V(A),g]+VX(g)\right)b+VX(f).\]
The last term we need in order to compute $\Lambda$ is:

\[[X(F),F]=b^{-1}[[A+\Phi,g]+X(g),g]b+[b^{-1}([A+\Phi,g]+X(g))b,f]+
[X(f),b^{-1}g\,b]+[X(f),f].\]

Note that $X(g)=dg$, $H(g)=-\star dg$ and $V(A)=-\star A$.
Putting everything together, using the definition of $\Psi$ in terms of $f$ and
simplifying we obtain
\[-\Lambda=b^{-1}\left(-[[A+\Phi,g]+dg],g]-2[\star A,g]-2\star dg\right)b-\Psi.\]
We can simplify this further as
\[-\Lambda=b^{-1}\left(-[d_{A}g,g]-[[\Phi,g],g]-2\star d_{A}g-\Phi\right)b.\]
Let us set
\[T:=[d_{A}g,g]+[[\Phi,g],g]+2\star d_{A}g+\Phi,\]
then we see that $(F,\Lambda)$ satisfies (\ref{mypde}) iff $T$ satisfies
\[V(T)+[g,T]=0.\]
Recall that the Lie bracket in $\s$ satisfies $[a,[b,c]]=b\langle a,c\rangle-c\langle a,b\rangle$.
Thus since $g$ has norm one (we are assuming $g^3+g=0$ with $g$ non-zero) we see that
\[[[\Phi,g],g]=-\Phi+g\langle g,\Phi\rangle,\]
and since $\langle d_{A}g,g\rangle=0$ we also see that
\[[g,[d_{A}g,g]]=d_{A}g.\]
Therefore
\begin{align*}
V(T)&=-[\star d_{A}g,g]+2d_{A}g\\
[g,T]&=d_{A}g+2[g,\star d_{A}g].
\end{align*}
Thus $(F,\Lambda)$ satisfies (\ref{mypde}) iff $g$ satisfies:
\[-[\star d_{A}g,g]+d_{A}g=0.\]
and applying $\star$ we see that the last equation is equivalent to (\ref{gmero}).

\end{proof}

From the proof above we can derive a fairly explicit form for $\Phi_{g}$ and $A_{g}$.
Since $\Phi_{g}=(ab)\Lambda(ab)^{-1}$ and
\[b\Lambda b^{-1}=[d_{A}g,g]+g\langle g,\Phi\rangle+2\star d_{A}g\]
we obtain:
\[\Phi_{g}=a([d_{A}g,g]+g\langle g,\Phi\rangle+2\star d_{A}g)a^{-1},\]
and using that $[d_{A}g,g]=-\star d_{A}g$ we have
\begin{equation}
\Phi_{g}=a(g\langle g,\Phi\rangle+\star d_{A}g)a^{-1}.
\label{eq:phi}
\end{equation}
And from this and $A_{g}+\Phi_{g}=-X(ab)(ab)^{-1}=-X(a)a^{-1}+a(A+\Phi)a^{-1}$ we derive the following formula for $A_{g}$:
\begin{equation}
A_{g}=-X(a)a^{-1}+a(A+\Phi-g\langle g,\Phi\rangle-\star d_{A}g)a^{-1}.
\label{eq:aF}
\end{equation}

\subsection{New features} In this subsection we explain the relationship
between the transformation we just introduced and the one described in
\cite{P1} for cohomologically trivial connections (no Higgs field present).
We also explain how to obtain cohomologically trivial $SU(2)$-pairs.

\begin{Lemma} Let $q:=aga^{-1}$. Then $V(q)=0$ and $d_{A_{g}}q=a[\Phi,g]a^{-1}=[a\Phi a^{-1},q]$. Moreover
\[d_{A_{g}}q=-[\star d_{A_{g}}q,q].\]
\end {Lemma}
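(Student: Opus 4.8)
The plan is to obtain all three assertions by transporting the known facts about $g$ (from Lemma \ref{lemma:gmero} and formulas (\ref{eq:phi})--(\ref{eq:aF})) through conjugation by $a$, using the relation $ag=V(a)$. For the first assertion, write $q=aga^{-1}$ and differentiate: $V(q)=V(a)ga^{-1}+aV(g)a^{-1}+ag\,V(a^{-1})$. Since $V(g)=0$ by hypothesis on $g$, and $V(a)=ag$, $V(a^{-1})=-a^{-1}V(a)a^{-1}=-ga^{-1}$, the three terms become $ag\cdot g a^{-1}+0-ag\cdot g a^{-1}=0$. So $V(q)=0$, i.e. $q$ descends to a genuine Higgs-type field on $M$.

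For the second assertion I would compute $d_{A_g}q=dq+[A_g,q]$ directly. The term $dq$ is $da\,ga^{-1}+a\,dg\,a^{-1}-ag\,da^{-1}\cdot a\,a^{-1}$; combining, $dq=[da\,a^{-1},q]+a(dg)a^{-1}$. Then $[A_g,q]=[-X(a)a^{-1}+a(A+\Phi-g\langle g,\Phi\rangle-\star d_Ag)a^{-1},\,aga^{-1}]$ by (\ref{eq:aF}). The conjugated-by-$a$ pieces contribute $a[A+\Phi-g\langle g,\Phi\rangle-\star d_Ag,\,g]a^{-1}$; note $[g\langle g,\Phi\rangle,g]=0$ and $[d_Ag,g]=-\star d_Ag$ (from (\ref{gmero})), so $[\star d_Ag,g]=\star\star d_Ag$ up to sign bookkeeping — the precise cancellation is that $[A-\star d_Ag,g]$ combines with the $da\,a^{-1}=dA_g\text{-part}$ derivative terms to leave only $[d_A g,g]+[\Phi,g]$ conjugated, and $[d_Ag,g]+[\ldots]$ collapses. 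After the dust settles one is left with $d_{A_g}q=a[\Phi,g]a^{-1}$. Rewriting $[\Phi,g]=[\Phi,a^{-1}qa]=a^{-1}[a\Phi a^{-1},q]a$ then gives $d_{A_g}q=[a\Phi a^{-1},q]$, the second claimed equality.

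For the third assertion, apply the identity $[\star d_{A_g}q,q]=-d_{A_g}q$, which should follow from the gauge-covariance of equation (\ref{gmero}): since $g$ satisfies $-\star d_Ag=[d_Ag,g]$ and $(A_g,q)$ is the image of $(A,g)$ under conjugation by $a$ together with the shift $-X(a)a^{-1}$, the transformed equation $-\star d_{A_g}q=[d_{A_g}q,q]$ holds; applying $\star$ (recalling $\star$ corresponds to $-V$ on one-forms but here acts on the $M$-level one-form $d_{A_g}q$, with $\star\star=-1$ on one-forms on a surface) and using $[g,[d_{A_g}q,q]]=d_{A_g}q$ (valid because $q$ has norm one and $\langle d_{A_g}q,q\rangle=0$, exactly as in the proof of Lemma \ref{lemma:gmero}) converts $-\star d_{A_g}q=[d_{A_g}q,q]$ into $d_{A_g}q=-[\star d_{A_g}q,q]$.

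The main obstacle I expect is the bookkeeping in the second assertion: keeping straight the interplay of $X$ versus $d$ (i.e. $X(a)=da$ acting on $SM$ versus on $M$), the identity $V(A)=-\star A$, and the several nested brackets in $\s$ via $[a,[b,c]]=b\langle a,c\rangle-c\langle a,b\rangle$. The cleanest route is probably to avoid recomputing from scratch and instead argue structurally: conjugation by $a$ intertwines $d_A$ with $d_{a^{-1}da+a^{-1}Aa}$, and $A_g+\Phi_g=-X(a)a^{-1}+a(A+\Phi)a^{-1}$ shows $A_g$ differs from $a$-conjugate-of-$A$ by exactly the Higgs correction $a(\Phi-g\langle g,\Phi\rangle-\star d_Ag)a^{-1}-\Phi_g$; feeding this into $d_{A_g}q$ and using $[d_Ag,g]=-\star d_Ag$ makes everything cancel except $a[\Phi,g]a^{-1}$. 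Once this is set up the remaining steps are formal manipulations in $\s$.
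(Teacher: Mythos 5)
Your first assertion is proved exactly as in the paper, and your plan for the second assertion — compute $dq$ and $[A_g,q]$ from (\ref{eq:aF}), use $[g\langle g,\Phi\rangle,g]=0$ and $d_Ag-[\star d_Ag,g]=0$ (the $\star$-image of (\ref{gmero})) to cancel everything except $a[\Phi,g]a^{-1}$ — is the same computation the paper carries out, just organised a little differently. That part is fine, though you should tighten the sign bookkeeping you flag as uncertain.

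The third assertion is where you go wrong, and in two ways. First, the ``gauge-covariance'' claim is false: $(A_g,q)$ is \emph{not} a gauge transform of $(A,g)$, because $A_g=-X(a)a^{-1}+a(A+\Phi-g\langle g,\Phi\rangle-\star d_Ag)a^{-1}$ contains the Higgs correction $a(\Phi-g\langle g,\Phi\rangle-\star d_Ag)a^{-1}$ on top of the pure gauge piece $-X(a)a^{-1}+aAa^{-1}$, so equation (\ref{gmero}) is not automatically inherited. In fact $q$ does \emph{not} satisfy (\ref{gmero}) with $A_g$: from $d_{A_g}q=a[\Phi,g]a^{-1}=[a\Phi a^{-1},q]$ and the $\s$-identity $[a,[b,c]]=b\langle a,c\rangle-c\langle a,b\rangle$ one finds $[d_{A_g}q,q]=-(a\Phi a^{-1}-q\langle q,a\Phi a^{-1}\rangle)$, whereas the proof in the paper computes $-\star d_{A_g}q=V(d_{A_g}q)=+\big(a\Phi a^{-1}-q\langle q,a\Phi a^{-1}\rangle\big)$; so the relation that actually holds is $\star d_{A_g}q=[d_{A_g}q,q]$, the opposite sign from (\ref{gmero}). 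It is $g'=-q$, not $q$, that satisfies (\ref{gmero}) for $A_g$ — and this sign is essential for the remark after the lemma about inverting the B\"acklund transformation. Second, even granting your premise $-\star d_{A_g}q=[d_{A_g}q,q]$, applying $\star$ (with $\star\star=-\mathrm{Id}$) or bracketing with $q$ would give $d_{A_g}q=[\star d_{A_g}q,q]$, again the wrong sign relative to the lemma's claim $d_{A_g}q=-[\star d_{A_g}q,q]$. The correct route is the paper's: use the already-established formula $d_{A_g}q=a[\Phi,g]a^{-1}$, compute $-\star d_{A_g}q=V(d_{A_g}q)$ directly via $V(a)=ag$ and $V(a^{-1})=-ga^{-1}$, and simplify $[q,[a\Phi a^{-1},q]]$ with $|q|=1$; this yields $-\star d_{A_g}q=a\Phi a^{-1}-q\langle q,a\Phi a^{-1}\rangle$, whence $-[\star d_{A_g}q,q]=[a\Phi a^{-1},q]=d_{A_g}q$.
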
 

\begin{proof} Since $V(g)=0$ we have $V(q)=V(a)ga^{-1}+agV(a^{-1})$.
But $ag=V(a)$ and taking transposes $-ga^{-1}=V(a^{-1})$ (recall that $g^t=-g$ and $a^{-1}=a^t$). Therefore $V(q)=ag^2 a^{-1}-ag^{2}a^{-1}=0$.

Since $V(q)=0$ we may identify $dq$ with $X(q)$, so we compute using (\ref{eq:aF}):
\begin{align*}
dq&=X(q)=X(a)g a^{-1}+aX(g)a^{-1}+agX(a^{-1})\\
&=X(a)a^{-1}q+a\,dg\,a^{-1}+q\,aX(a^{-1})\\
&=[X(a)a^{-1},q]+a\,dg\,a^{-1}\\
&=[-A_{g},q]+a[A+\Phi-\star d_{A}g,g]a^{-1}+a\,dg\,a^{-1}\\
&=[-A_{g},q]+a(d_{A}g-[\star d_{A}g,g])a^{-1}+a[\Phi,g]a^{-1}\\
\end{align*}
and the first part of the lemma follows from the fact that $d_{A}g-[\star d_{A}g,g]=0$.

To prove the equation displayed in the statement of the lemma we compute
using that $d_{A_{g}}q=a[\Phi,g]a^{-1}$ and $ag=V(a)$:
\begin{align*}
-\star d_{A_{g}}q&=V(d_{A_{F}}q)=V(a)[\Phi,g]a^{-1}+a[\Phi,g]V(a^{-1})\\
&=qa[\Phi,g]a^{-1}-a[\Phi,g]a^{-1}q\\
&=[q,[a\Phi a^{-1},q]]\\
&=a\Phi a^{-1}-q\langle q,a\Phi a^{-1}\rangle,\\
\end{align*}
where in the last equation we used that $|q|=1$ and that $[a,[b,c]]=b\langle a,c\rangle-c\langle a,b\rangle$.
Thus
\[-[\star d_{A_{g}}q,q]=[a\Phi a^{-1},q]=d_{A_{g}}q\]
as desired.

\end{proof}

It follows from the lemma that if we let $g':=-q=-aga^{-1}$, then $g'$ satisfies equation (\ref{gmero}). Also, since $a^{-1}g'=-ga^{-1}=V(a^{-1})$ it follows
that if we run the B\"acklund transformation on $(A_{g},\Phi_{g})$ using
$g'$ and $a^{-1}$ we get back to the pair $(A,\Phi)$.

There is an interesting case which arises from the lemma.
Suppose we start with a cohomologically trivial pair of the form $(A,0)$, i.e. $\Phi=0$ and we run the
B\"acklund transformation with $(g,a)$. Using (\ref{eq:phi}) we see that
\[\Phi_{g}=a(\star d_{A}g)a^{-1}.\]
But if $\Phi=0$, then $d_{A_{g}}q=0$ and thus $q$ satisfies (\ref{gmero}).
Choose $b:SM\to SO(3)$ such that $bq=V(b)$ and run the B\"acklund transformation again on the pair $(A_{g},\Phi_{g})$ using $(q,b)$. We obtain a new
cohomologically trivial pair $(A_{q},\Phi_{q})$. Using (\ref{eq:phi}) we see that
\[\Phi_{q}=b(q\langle q,\Phi_{g}\rangle+\star d_{A_{g}}q)b^{-1}.\]
But $d_{A_{g}}q=0$ and
\[\langle q,\Phi_{g}\rangle=\langle aga^{-1},a(\star d_{A}g)a^{-1}\rangle
=\langle g,\star d_{A}g\rangle=0\]
and hence $\Phi_{q}=0$.
Thus doing this special 2-step B\"acklund transformation on the cohomologically trivial connection $A$ produces a new cohomologically trivial connection $A_{q}$.

We claim that this 2-step $SO(3)$ B\"acklund transformation coincides with the
$SU(2)$ B\"acklund transformation for connections introduced
in \cite{P1}. In this way we have introduced a ``square root'' which requires
the intermediate step to have a non-trivial Higgs field.

Here is a way to see this. The 2-step process is implemented by
$c:=a_{q}a$ so let us compute
\[c^{-1}V(c)=a^{-1}a_{q}^{-1}(V(a_{q})a+a_{q}V(a))=a^{-1}qa+g=2g.\]
Consider the isomorphism
$\ell:\s\to \mathfrak{su}(2)$ given by
\[\left(\begin{array}{ccc}
0&t&x\\
-t&0&y\\
-x&-y&0\end{array}\right)\mapsto\frac{1}{2}\left(\begin{array}{cc}
-it&-x-iy\\
x-iy&it\end{array}\right).\]
Given $g:M\to\s$ with norm one, it is easy to check that the map
$h:M\to\mathfrak{su}(2)$ defined as $h:=\ell\circ 2g$ has the property
that $h^{2}=-\mbox{\rm Id}$. Clearly we can reverse this process and obtain
$g$ given $h$.
Now observe that $g$ satisfies $-\star d_{A}g=[d_{A}g,g]$
if and only if $h$ satisfies $-2\star d_{\ell\circ A}h=[d_{\ell\circ A}h,h]$.
The last equation is precisely what is needed to run the $SU(2)$ B\"acklund transformation
on cohomologically trivial connections (i.e. with $\Phi=0$).

We conclude this subsection with the following remark.
Note that a cohomologically trivial $SO(3)$-pair defines
a cohomologically trivial $SU(2)$-pair if and only if there is a trivializing
function $u:SM\to SO(3)$ such that the induced homomorphism
$u_{*}:\pi_{1}(SM)\to \pi_{1}(SO(3))=\Z_{2}$ is trivial. Recall that multiplying a trivializing function on the left by a gauge transformation of $M$ gives a trivializing function of a gauge equivalent pair.
 We claim that
if we start
with $u$ such that $u_{*}=0$ (e.g. we start with $(A,\Phi)=0$) and we apply
the SO(3) B\"acklund transformation an even number of times we obtain a
cohomologically trivial $SU(2)$-pair. Observe first that 
\[[0,2\pi]\ni\theta\mapsto \mbox{\rm Id}+g^2+\sin\theta\,g-\cos\theta\,g^{2}\]
where $g$ has norm one is an explicit non-trivial loop in $SO(3)$, thus
the map $a$ with $g=a^{-1}V(a)$ is such that $a_{*}$ is non-zero.
In fact since $(ab)_{*}=a_{*}+b_{*}\,(\mbox{\rm mod}\,2)$ we see that
after applying two B\"acklund transformations with $a$ and $b$, the
morphism $(ab)_{*}$ has the element generated by the fibres of $SM$ in its kernel
and thus it induces a morphism $\rho:\pi_{1}(M)\to \Z_{2}$. But it is always possible to choose a smooth gauge map $r:M\to SO(3)$ such that $r_{*}=\rho$. Indeed, 
express the surface $M$ as a wedge of circles to which a 2-cell
is attached and use for example \cite[Lemma 4.31]{H} to obtain a continuous
$r:M\to SO(3)$ with $r_{*}=\rho$. Now approximate the continuous map by a smooth one. Hence there is always a smooth
$r:M\to SO(3)$ such that $(rab)_{*}=0$. Hence after applying an even number of $SO(3)$ B\"acklund transformations we obtain a trivializing function which lifts to $SU(2)$.

\subsection{Holomorphic interpretation of (\ref{gmero})}

We will now rephrase equation (\ref{gmero}) in terms of holomorphic line bundles. 
There are two ways in which we can do this. First consider as above the isomorphism $\ell:\s\to\mathfrak{su}(2)$ and
$h:M\to\mathfrak{su}(2)$ defined as $h:=\ell\circ 2g$ satisfying $h^{2}=-\mbox{\rm Id}$.
We pointed out already that $g$ satisfies $-\star d_{A}g=[d_{A}g,g]$
if and only if $h$ satisfies $-2\star d_{\ell\circ A}h=[d_{\ell\circ A}h,h]$.
Now, the meaning of this last equation was analysed in detail in \cite{P1}.
If we let $L$ denote the line bundle given by the eigenspace associated with the eigenvalue $i$
of $h$, it turns out (cf. \cite[Lemma 4.3]{P1}) that $h$ satisfies
$-2\star d_{\ell\circ A}h=[d_{\ell\circ A}h,h]$ if and only if the line bundle
$L$ is holomorphic with respect to the complex structure in $M\times\C^2$ induced by $\ell\circ A$ (we will prove something quite similar below).
Thus maps $g:M\to\s$ with norm one satisfying (\ref{gmero}) are in 1-1 correspondence with holomorphic line subbundles of $M\times\C^2$ with respect to the complex structure induced by $\ell\circ A$. 
It is well known that holomorphic line subbundles always exist (there are always non-zero meromorphic sections).

The second way to interpret (\ref{gmero}) is closely related and is needed to complete the classification in the next section.
For this we consider the monomorphism $SO(3)\hookrightarrow SU(3)$ and the corresponding monomorphism of Lie algebras $\s\hookrightarrow \mathfrak{su}(3)$.
We will thus think of our $SO(3)$-connection also as an $SU(3)$-connection.

Recall that an $SU(3)$-connection $A$ induces a holomorphic structure on the trivial bundle $M\times \C^3$
and on the endomorphism bundle $M\times \M_{3}(\C)$. 
We have an operator $\bar{\partial}_{A}=(d_{A}-i\star d_{A})/2=\bar{\partial}+[A_{-1},\cdot]$
acting on sections $f:M\to \M_{3}(\C)$.
 
Set $\pi:=-g(g+i\mbox{\rm Id})/2$ and $\pi^{\perp}=\mbox{\rm Id}+g(g+i\mbox{\rm Id})/2$ so that
$\pi+\pi^{\perp}=\mbox{\rm Id}$. For each $x\in M$, the map $\pi(x)$ is the Hermitian orthogonal projection over the 1-dimensional subspace $E_{i}(x)$ given by the eigenvectors of $g(x)$ with eigenvalue $i$. The map $\pi^{\perp}(x)$ is the Hermitian orthogonal projection 
onto $\mbox{\rm Ker}(g(x))\oplus E_{-i}(x)$.

\begin{Lemma} Let $g:M\to \s$ be a smooth map with $\langle g(x),g(x)\rangle=1$
for all $x\in M$.
The following are equivalent:
\begin{enumerate}
\item $-\star d_{A}g=[d_{A}g,g]$;
\item $\bar{\partial}_{A}g=i[\bar{\partial}_{A}g,g]$;
\item $E_{i}$ is a $\bar{\partial}_{A}$-holomorphic line bundle;
\item $\pi^{\perp}\bar{\partial}_{A}\pi=0$.
\end{enumerate}
\label{lemma:eq}
\end{Lemma}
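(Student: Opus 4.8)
The plan is to prove the chain of equivalences $(1)\Leftrightarrow(2)\Leftrightarrow(3)\Leftrightarrow(4)$ by a mixture of pointwise Lie-algebra identities in $\s$ and the standard dictionary between holomorphic structures and $\bar\partial$-operators. For $(1)\Leftrightarrow(2)$ I would work purely algebraically: write $d_Ag$ and $\star d_Ag$ as the real and imaginary parts (up to the usual factor) of $\bar\partial_Ag$, i.e. $\bar\partial_Ag=\tfrac12(d_Ag-i\star d_Ag)$, and substitute into (2). Expanding $i[\bar\partial_Ag,g]=\tfrac{i}{2}[d_Ag,g]+\tfrac12[\star d_Ag,g]$ and separating into the $H_{-1}$-part and what remains (or simply comparing the coefficient of $d\bar z$), equation (2) becomes $d_Ag-i\star d_Ag=[\star d_Ag,g]+i[d_Ag,g]$, and matching real/imaginary parts gives exactly $-\star d_Ag=[d_Ag,g]$ together with its $\star$-transform $d_Ag=[\star d_Ag,g]$ — which are equivalent to each other by applying $\star$, using $\star^2=-1$ on $1$-forms and the fact (already exploited in the proof of Lemma \ref{lemma:gmero}) that $\langle d_Ag,g\rangle=0$, so that $[g,[d_Ag,g]]=d_Ag$. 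So this step is just bookkeeping with the bracket identity $[a,[b,c]]=b\langle a,c\rangle-c\langle a,b\rangle$ and $|g|=1$.

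For $(2)\Leftrightarrow(4)$ I would pass to the projection $\pi=-g(g+i\,\mbox{\rm Id})/2$. Since $\pi$ is a polynomial in $g$, $\bar\partial_A\pi=-\tfrac12(\bar\partial_Ag)(g+i\,\mbox{\rm Id})-\tfrac12 g\,\bar\partial_Ag$, and one computes $\pi^\perp\bar\partial_A\pi$ by multiplying on the left by $\pi^\perp=\mbox{\rm Id}+g(g+i\,\mbox{\rm Id})/2$ and using $g^3=-g$. The point is that $\pi^\perp\bar\partial_A\pi=0$ is, after this substitution, a rewriting of $\bar\partial_Ag=i[\bar\partial_Ag,g]$: indeed $\pi^\perp\bar\partial_A\pi=0$ says $\bar\partial_A g$ maps $E_i$ into $E_i$-direction (in the endomorphism sense), which is the off-diagonal-block vanishing encoded by (2). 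I would carry out the multiplication explicitly, using that $g$ acts as $i$ on $E_i$, as $-i$ on $E_{-i}$ and as $0$ on $\mbox{\rm Ker}(g)$, to see the two conditions coincide. Then $(4)\Leftrightarrow(3)$ is the standard fact (Koszul–Malgrange / the integrability of $\bar\partial_A$ on a surface): a smooth subbundle $E_i\subset M\times\C^3$ is holomorphic for the holomorphic structure $\bar\partial_A$ precisely when $\bar\partial_A$ preserves its sections, i.e. when the second fundamental form $\pi^\perp\bar\partial_A\pi$ vanishes; on a Riemann surface there is no integrability obstruction, so a $\bar\partial$-operator on a line bundle always defines a holomorphic structure. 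I would cite this as in \cite{P1} rather than reprove it.

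The main obstacle is the pointwise algebra in step $(2)\Leftrightarrow(4)$: one must be careful that $g$ is a $3\times3$ matrix with a kernel, so $\pi$ and $\pi^\perp$ are genuine rank-one and rank-two Hermitian projections (not the $SU(2)$ situation), and the identity $g^3=-g$ rather than $g^2=-\mbox{\rm Id}$ governs the simplifications; the temptation to divide by $g$ must be resisted. A clean way to organize it is to split everything according to the orthogonal decomposition $\C^3=E_i\oplus E_{-i}\oplus\mbox{\rm Ker}(g)$ and track how $\bar\partial_Ag$ (which is skew-Hermitian-valued up to the complexification, since $g\in\s$) decomposes; the condition (2) is then visibly the statement that $\bar\partial_Ag$ has no component sending $E_{-i}\oplus\mbox{\rm Ker}(g)$ to $E_i$, which is $\pi^\perp\bar\partial_A\pi=0$. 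Everything else is routine substitution, and I would suppress the lengthiest computations with a remark that they are elementary, as the author does elsewhere (e.g. in the backward direction of Theorem \ref{theorem:1-1}).
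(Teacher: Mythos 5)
Your overall architecture — $(1)\Leftrightarrow(2)$ by separating $\bar\partial_A g=\tfrac12(d_Ag-i\star d_Ag)$ into real and imaginary parts, $(2)\Leftrightarrow(4)$ by expanding $\pi^\perp\bar\partial_A\pi$ in terms of $\bar\partial_A g$ and reducing via $g^3+g=0$, and $(3)\Leftrightarrow(4)$ via the second-fundamental-form criterion — is precisely the route the paper takes, and the computational plan would close out correctly. The $(1)\Leftrightarrow(2)$ step is fine (and in fact the bracket identity $[g,[d_Ag,g]]=d_Ag$ you invoke is not needed there: the two equations obtained from the real and imaginary parts are already interchanged by applying $\star$, using only $\star^2=-1$ and that $\star$ commutes with $[\cdot,g]$). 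Two small points: $\bar\partial_Ag$ is complex \emph{antisymmetric} (i.e. $\mathfrak{so}(3,\C)$-valued), not skew-Hermitian; and for $(3)\Leftrightarrow(4)$ the paper proves the criterion directly from $\pi^2=\pi$ and Leibniz rather than citing it, but the content is the same.

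The one genuine error is in your block-diagonal intuition for $(2)\Leftrightarrow(4)$. You assert that $(2)$ is ``visibly the statement that $\bar\partial_Ag$ has no component sending $E_{-i}\oplus\mathrm{Ker}(g)$ to $E_i$.'' That is not what the conditions say — indeed it is nearly the opposite. Working in the eigenbasis $E_i\oplus\mathrm{Ker}(g)\oplus E_{-i}$ with eigenvalues $(i,0,-i)$, write $B=\bar\partial_Ag$ in blocks $B_{jk}$. Then $B=i[B,g]$ forces $B_{jk}\bigl(1-i(\lambda_k-\lambda_j)\bigr)=0$, which annihilates every block \emph{except} $B_{E_i\leftarrow\mathrm{Ker}(g)}$ and $B_{\mathrm{Ker}(g)\leftarrow E_{-i}}$; so under $(2)$ the $\mathrm{Ker}(g)\to E_i$ component is precisely the one that survives, not the one that vanishes. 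Meanwhile, the paper's reduction of $(4)$ (via eq.~(\ref{eq:cmero})) is $(\bar\partial_Ag)\pi=0$, i.e.\ $\bar\partial_Ag$ annihilates the column $E_i$; by the antisymmetry $(\bar\partial_Ag)^t=-\bar\partial_Ag$ this is the same as saying $\bar\partial_Ag$ has no component \emph{landing in} $E_{-i}$, and one checks that together with the antisymmetry constraints this is exactly the block pattern of $(2)$. So the equivalence is true, but through a different mechanism than the one you describe; if you trusted the stated intuition rather than carrying out the multiplication you propose, you would be led to the wrong condition ($B_{E_i\leftarrow\mathrm{Ker}(g)}=0$, which is \emph{not} equivalent to $(1)$). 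It is also this antisymmetry that the paper uses, in the algebraic form of ``take transposes in (\ref{eq:cmero1})'', to recover the symmetrised equation $-d_Ag=(d_Ag)g^2+g^2(d_Ag)$; your plan should make that use of $g^t=-g$ and $(\bar\partial_Ag)^t=-\bar\partial_Ag$ explicit rather than implicit.
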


\begin{proof}

Suppose that (1) holds. Apply $\star$ to obtain: $d_{A}g=[\star d_{A}g,g]$.
Thus $d_{A}g-i\star d_{A}g=i[d_{A}g-i\star d_{A}g,g]$.
In other words $\bar{\partial}_{A}g=i[\bar{\partial}_{A}g,g]$ which is (2).
Conversely, if (2) holds we recover (1) just by taking real and imaginary 
parts.

Let us show that (1) and (4) are equivalent. We make some preliminary observations. Since $g^3+g=0$ we derive
\[(d_{A}g)g^2+d_{A}g+g(d_{A}g)g+g^{2}(d_{A}g)=0\]
which together with $[g,[d_{A}g,g]]=d_{A}g$ shows that $g(d_{A}g)g=0$ and
\[-d_{A}g=(d_{A}g)g^2+g^{2}(d_{A}g).\]
Consider the equation $\pi^{\perp}\partial_{A}\pi=0$. It is equivalent to
saying that that the image of $\bar{\partial}_{A}\pi$ is contained in
$\mbox{\rm ker}\,\pi^{\perp}=\mbox{\rm Im}\,\pi$. Since the image of $\pi$
is $E_{i}$ the equation $\pi^{\perp}\partial_{A}\pi=0$ is equivalent to
\[g(\bar{\partial}_{A}\pi)=i\,\bar{\partial}_{A}\pi.\]
But
\[\bar{\partial}_{A}\pi=-\frac{1}{2}((\bar{\partial}_{A}g)g+g(\bar{\partial}_{A}g)+i\,\bar{\partial}_{A}g)\]
and thus (4) is equivalent to
\[g(\bar{\partial}_{A}g)g+g^{2}(\bar{\partial}_{A}g)+\bar{\partial}_{A}g=i\,(\bar{\partial}_{A}g)g.\]
Using the observations above this can be further simplified to
\begin{equation}
i\,(\bar{\partial}_{A}g)g=-(\bar{\partial}_{A}g)g^{2}.
\label{eq:cmero}
\end{equation}
We claim that (\ref{eq:cmero}) is equivalent to (1). First note that
(\ref{eq:cmero}) is equivalent to 
\begin{equation}
-(\star d_{A}g)g=(d_{A}g)g^{2}.
\label{eq:cmero1}
\end{equation}
Clearly one can go from (1) to (\ref{eq:cmero1}) by multiplying (1) by $g^2$
and using that $g^3+g=0$ and $g(d_{A}g)g=0$. To go from (\ref{eq:cmero1}) to (1) take transposes in (\ref{eq:cmero1}) to obtain
\[g(\star d_{A}g)=g^{2}(d_{A}g)\]
and if we add this to (\ref{eq:cmero1}) we get
\[-d_{A}g=(d_{A}g)g^2+g^2(d_{A}g)=-[\star d_{A}g,g]\]
which is (1).

Finally we will prove that (3) and (4) are equivalent.
Using the condition $\pi^2=\pi$, we see that $\bar{\partial}_{A}\pi=(\bar{\partial}_{A}\pi)\pi+\pi(\bar{\partial}_{A}\pi)$. Recall that
$\pi^{\perp}\bar{\partial}_{A}\pi=0$ is equivalent
to saying that the image of $\bar{\partial}_{A}\pi$ is contained in
$\mbox{\rm ker}\,\pi^{\perp}=\mbox{\rm Im}\,\pi$ which in turn is equivalent to $\pi(\bar{\partial}_{A}\pi)=\bar{\partial}_{A}\pi$. Hence $\pi^{\perp}\bar{\partial}_{A}\pi=0$ if and only if 
$(\bar{\partial}_{A}\pi)\pi=0$.

The line bundle $E_{i}$ is holomorphic iff given
a local section $\xi$ of $E_{i}$, we have $\bar{\partial}_{A}\xi\in E_{i}$. Applying $\bar{\partial}_{A}$ to $\pi\xi=\xi$ we see that
$\bar{\partial}_{A}\xi\in E_{i}$ iff $(\bar{\partial}_{A}\pi)\xi=0$.
Clearly, this happens iff $(\bar{\partial}_{A}\pi)\pi=0$ and thus (3) holds iff (4) holds.

\end{proof}

The next theorem summarises the B\"acklund transformation introduced in this section and it follows directly from Lemma \ref{lemma:gmero} and Theorem \ref{theorem:1-1}.

\begin{Theorem} Let $(A,\Phi)$ be a cohomologically trivial pair and let
$g:M\to \s$ be a smooth map with $|g(x)|=1$ for all $x\in M$ such that
$-\star d_{A}g=[d_{A}g,g]$.
Consider $a:SM\to SO(3)$ with $g=a^{-1}V(a)$ as given by Corollary \ref{cor:existencea}. Then
\begin{align*}
\Phi_{g}&=a(g\langle g,\Phi\rangle+\star d_{A}g)a^{-1},\\
A_{g}&=-X(a)a^{-1}+a(A+\Phi-g\langle g,\Phi\rangle-\star d_{A}g)a^{-1}\\
\end{align*}
defines a cohomologically trivial pair $(A_{g},\Phi_{g})$.
\label{cor:back}
\end{Theorem}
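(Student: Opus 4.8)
The plan is to deduce Theorem \ref{cor:back} directly from the two tools we have already built: Lemma \ref{lemma:gmero}, which says that under the hypothesis $-\star d_{A}g=[d_{A}g,g]$ the pair $(F,\Lambda)$ built from $u:=ab$ satisfies equation (\ref{mypde}), and Theorem \ref{theorem:1-1}, which converts such a pair back into a cohomologically trivial pair. So first I would take a trivializing function $b:SM\to SO(3)$ for the given pair $(A,\Phi)$, so that $X(b)+(A+\Phi)b=0$; setting $f:=b^{-1}V(b)$ and $-\Psi:=H(f)+VX(f)-[X(f),f]$, the forward direction of Theorem \ref{theorem:1-1} guarantees that $(f,\Psi)$ lies in $\cH_{0}$, in particular that it satisfies (\ref{mypde}). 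This is exactly the setup assumed at the start of Section \ref{bt}.

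Next, invoke Corollary \ref{cor:existencea} with the map $g$ (which by hypothesis has $|g|=1$, hence $g^{3}+g=0$ since its eigenvalues are $0,\pm i$) to produce $a:SM\to SO(3)$ with $ag=V(a)$, and form $u:=ab$, $F:=u^{-1}V(u)=b^{-1}gb+f$, and $-\Lambda:=H(F)+VX(F)-[X(F),F]$. By Lemma \ref{lemma:gmero}, the hypothesis $-\star d_{A}g=[d_{A}g,g]$ is precisely equivalent to $(F,\Lambda)$ satisfying (\ref{mypde}), i.e.\ $V(\Lambda)+[F,\Lambda]=0$. Hence $(F,\Lambda)\in\cH_{0}$ (with trivializing function $u$), and applying the backward direction of Theorem \ref{theorem:1-1} yields a cohomologically trivial pair whose trivializing function is $u=ab$, with $A_{g}+\Phi_{g}=-X(ab)(ab)^{-1}$ and $\Phi_{g}=(ab)\Lambda(ab)^{-1}$.

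Finally I would just record the explicit formulas. These are already derived in the discussion immediately following the proof of Lemma \ref{lemma:gmero}: from $b\Lambda b^{-1}=[d_{A}g,g]+g\langle g,\Phi\rangle+2\star d_{A}g$ together with the identity $[d_{A}g,g]=-\star d_{A}g$ (valid because $g$ has norm one and $\langle d_{A}g,g\rangle=0$, as used inside the proof of Lemma \ref{lemma:gmero}), one gets $\Phi_{g}=a(g\langle g,\Phi\rangle+\star d_{A}g)a^{-1}$, which is (\ref{eq:phi}); and combining this with $A_{g}+\Phi_{g}=-X(a)a^{-1}+a(A+\Phi)a^{-1}$ gives the stated formula (\ref{eq:aF}) for $A_{g}$. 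So the theorem is simply the concatenation of Lemma \ref{lemma:gmero}, Theorem \ref{theorem:1-1}, and the bookkeeping in (\ref{eq:phi})--(\ref{eq:aF}), and essentially no new work is required.

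The only point needing a word of care — and the closest thing to an obstacle — is checking that the backward direction of Theorem \ref{theorem:1-1} genuinely applies, i.e.\ that the data $(F,\Lambda)$ really is an element of $\cH_{0}$ and not merely a pair satisfying the two PDEs: one needs $F=u^{-1}V(u)$ for a globally defined $u:SM\to SO(3)$, which holds here because $u=ab$ is globally defined by construction (this is exactly why Corollary \ref{cor:existencea}, and behind it Proposition \ref{prop:trivial} with $e(SM)$ even, were set up). Once that is noted, the rest is immediate.
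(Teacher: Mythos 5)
Your proposal is correct and follows exactly the route the paper takes: the paper's own justification is the single remark that the theorem ``follows directly from Lemma \ref{lemma:gmero} and Theorem \ref{theorem:1-1},'' together with the explicit formulas (\ref{eq:phi}) and (\ref{eq:aF}) derived just before it, and you have simply spelled out that deduction in full. Your added caveat about needing $u=ab$ globally defined (so that $(F,\Lambda)$ genuinely lies in $\cH_{0}$) is a correct and useful observation, already handled by Corollary \ref{cor:existencea}.
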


\section{The classification result}

Let $(A,\Phi)$ be a cohomologically trivial pair with a trivializing function of degree $N\neq 0$. We will show that $(A,\Phi)$ can be obtained as
a B\"acklund transformation of a cohomologically trivial pair which admits a trivializing function of degree $N-1$. Thus by induction we end up showing that all cohomologically trivial pairs which admit a trivializing function with finite Fourier series are B\"acklund transformations of cohomologically trivial pairs of degree zero, i.e. those pairs $(A,\Phi)$ with $\Phi=0$ and $A$ gauge equivalent to the
trivial connection. This will be our classification result and its consequence for negative curvature is Theorem \ref{theorem:main} below.

Let $(A,\Phi)$ be a cohomologically trivial pair with $A+\Phi=-X(b)b^{-1}$ and
$f=b^{-1}V(b)$, where $b:SM\to SO(3)$ is a trivializing function.
We think of $b$ as a map $b:SM\to SO(3)\hookrightarrow SU(3)\hookrightarrow  \M_{3}(\C)$ and as such we make the following:

\medskip

\noindent{\bf Assumption.} Suppose $b$ has a finite Fourier
expansion, i.e., $b=\sum_{k=-N}^{k=N}b_{k}$, where $N\geq 1$.
By Theorem \ref{theorem:finite} we know that this holds if $M$ has negative curvature.

\medskip
Let us assume also that $N$ is the degree of $b$ and thus both $b_{N}$ and 
$b_{-N}=\bar{b}_{N}$ are not identically zero.

The orthogonality condition $bb^t=b^t b=\mbox{\rm Id}$ implies that 
$b_{N}b^{t}_{N}=b^{t}_{N}b_{N}=0$. 
Since $b_{N}$ and $b_{N}^{t}$ must have the same rank,
$\mbox{\rm Im}\,b_{N}\subset \mbox{\rm Ker}\,b_{N}^{t}$ and
$\mbox{\rm Im}\,b_{N}^{t}\subset \mbox{\rm Ker}\,b_{N}$, it follows
that $\dim\,\mbox{\rm Im}\,b_{N}\leq 1$ and $\dim\,\mbox{\rm Ker}\,b_{N}\geq 2$
with a similar statement holding for $b_{N}^{t}$.
Since $b_N$ is not identically zero it follows that the rank of $b_N$ is
one on an open set, which, as we will see shortly, must be all of $M$ except for perhaps
a finite number of points.

 Consider now a fixed vector $\xi\in \mathbb C^3$ such
that $s(x,v):=b_{-N}(x,v)\xi\in \mathbb C^3$ is not zero identically.
Clearly $s$ can be seen as a section of $(M\times \mathbb C^3)\otimes K^{\otimes -N}$.
We may write $b_{-N}$ in local isothermal coordinates as $b_{-N}=he^{-iN\theta}$,
using the notation from
Section \ref{prelim}.
We can thus write $s$ locally as $s=e^{N\lambda}h\xi(d\bar{z})^N$.

\begin{Lemma} The local section $e^{-2N\lambda}s$ is $\bar{\partial}_{A}$-holomorphic.
\end{Lemma}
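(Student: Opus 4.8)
The plan is to unwind what it means for $s$, regarded as a section of $(M\times\C^3)\otimes K^{\otimes -N}$, to be annihilated by $\bar\partial_A$. Recall from Section~\ref{prelim} that under the identification $\Omega_{-N}\cong C^\infty(M,(M\times\C^3)\otimes K^{\otimes -N})$, an element $w=he^{-iN\theta}\in\Omega_{-N}$ corresponds to the section $he^{N\lambda}(d\bar z)^N$, and that $\mu_-$ acts on $\Omega_{-N}$ by formula~(\ref{eq:mu}):
\[
\mu_-(he^{-iN\theta})=e^{-(1-N)\lambda}\bigl(\bar\partial(he^{-N\lambda})+A_{\bar z}\,he^{-N\lambda}\bigr)e^{-i(N+1)\theta}.
\]
So $\mu_-(w)=0$ is exactly the statement that the local function $he^{-N\lambda}$ is $\bar\partial_A$-holomorphic; equivalently, writing the section as $(h e^{-N\lambda})e^{2N\lambda}(d\bar z)^N=e^{-2N\lambda}\cdot(h e^{N\lambda})(d\bar z)^N$, the local section $e^{-2N\lambda}s$ is $\bar\partial_A$-holomorphic. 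Thus the Lemma is equivalent to the clean assertion $\mu_-(b_{-N}\xi)=0$ in $\Omega_{-N}$, and the whole task is to prove that identity.

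To prove $\mu_-(b_{-N}\xi)=0$, I would start from the trivializing equation $X(b)+(A+\Phi)b=0$, i.e. $\mu_+(b)+\mu_-(b)+\Phi b=0$, and extract the bottom Fourier mode. Since $b=\sum_{k=-N}^{N}b_k$ with $b_k\in\Omega_k$, and since $\mu_+$ raises the index by one, $\mu_-$ lowers it by one, and $\Phi$ (being in $H_0$) preserves it, the component of this equation in $\Omega_{-N-1}$ reads $\mu_-(b_{-N})=0$ — there is simply no other term that can land in $\Omega_{-N-1}$ (the would-be contributor $\mu_+(b_{-N-2})$ is zero because $b_{-N-2}=0$ by the Assumption that $N$ is the degree). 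Multiplying on the right by the constant vector $\xi\in\C^3$ commutes with $\mu_-$ (which differentiates and multiplies only on the left by matrix-valued functions), so $\mu_-(b_{-N}\xi)=\mu_-(b_{-N})\xi=0$. Translating back through formula~(\ref{eq:mu}) and the bundle identification gives precisely that $e^{-2N\lambda}s$ is $\bar\partial_A$-holomorphic.

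I expect the only genuinely delicate point to be the bookkeeping of the bundle identification and the factors of $e^{\lambda}$: one must be careful that $\mu_-$ acting on $\Omega_{-N}$ really corresponds to $\bar\partial_A$ on sections of $K^{\otimes-N}$ after the stated identification (including the correct power $e^{-2N\lambda}$ relating $s=e^{N\lambda}h\xi(d\bar z)^N$ to the holomorphic-looking representative $e^{-N\lambda}h\xi$), and that $\Phi$ indeed lies in $H_0$ so it cannot shift Fourier modes. Both are immediate from Section~\ref{prelim}: $\Phi:M\to\s$ satisfies $V(\Phi)=0$ by definition, hence $\Phi\in\Omega_0$; and the identification of $\mu_-$ with $\bar\partial_A$ on $K^{\otimes m}$ is exactly the content of~(\ref{eq:a1})–(\ref{eq:mu}) together with the remark following them. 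So there is no real obstacle — the proof is essentially the observation that the lowest Fourier mode of the trivializing equation is $\mu_-(b_{-N})=0$, reinterpreted holomorphically.
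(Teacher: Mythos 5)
Your proof is correct and follows essentially the same route as the paper: read off the $\Omega_{-N-1}$ component of $\mu_+(b)+\mu_-(b)+\Phi b=0$ to get $\mu_-(b_{-N})=0$, multiply by $\xi$, and translate via the local formula~(\ref{eq:mu}) into $\bar\partial_A$-holomorphicity of $e^{-2N\lambda}s$. (One small slip: the displayed chain $(h e^{-N\lambda})e^{2N\lambda}(d\bar z)^N=e^{-2N\lambda}(h e^{N\lambda})(d\bar z)^N$ is not an equality — the left side is $s$ and the right side is $e^{-2N\lambda}s$ — but the surrounding reasoning makes clear what was meant, and the conclusion is unaffected.)
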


\begin{proof}Using the operators $\mu_{\pm}$ introduced in Section \ref{prelim} we can write $X(b)+(A+\Phi)b=0$ as
\[\mu_{+}(b_{k-1})+\mu_{-}(b_{k+1})+\Phi\,b_k=0\]
for all $k$. This gives $\mu_{+}(b_{N})=\mu_{-}(b_{-N})=0$.
But $\mu_{-}(b_{-N})=0$ is saying that $e^{-2N\lambda}s$ is $\bar{\partial}_{A}$-holomorphic. 
Indeed, using (\ref{eq:mu}), we see that $\mu_{-}(b_{-N})=0$ implies
\[ \bar{\partial}(he^{-N\lambda})+A_{\bar{z}}he^{-N\lambda}=0\]
which in turn implies
\[ \bar{\partial}(e^{-N\lambda}h\xi)+A_{\bar{z}}e^{-N\lambda}h\xi=0.\]
This equation says that $e^{-2N\lambda}s=e^{-N\lambda}h\xi(d\bar{z})^N$ is $\bar{\partial}_{A}$-holomorphic.

\end{proof}

The section $s$ spans a line bundle $L$ over $M$ which by the previous lemma is 
$\bar{\partial}_{A}$-holomorphic. The section $s$ may have zeros, but at a zero $z_0$, the line bundle extends holomorphically. Indeed, in a neighbourhood
of $z_0$ we may write
$e^{-2N\lambda(z)}s(z)=(z-z_0)^k w(z)$, where $w$ is a local holomorphic
section with $w(z_0)\neq 0$. The section $w$ spans a holomorphic line subbundle which coincides with the one spanned by $s$ off $z_0$.
Therefore $L$ is a $\bar{\partial}_{A}$-holomorphic line bundle that contains the image of $b_{-N}$.
We summarise this in a lemma (recall that we are assuming $N\neq 0$):

\begin{Lemma} The line bundle $L$ determined by the image of $b_{-N}$
is $\bar{\partial}_{A}$-holomorphic.
\label{lemma:holo}
\end{Lemma}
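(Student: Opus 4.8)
The plan is to reduce the statement to the equivalence ``(3) $\Leftrightarrow$ (4)'' — or more precisely to the characterisation of holomorphicity via the kernel condition $\mu_{-}(b_{-N})=0$ — and then handle the possible zeros of the spanning section by the standard fact that a subbundle of a holomorphic bundle defined off a finite set of points extends holomorphically across those points. First I would record that the cohomological triviality equation $X(b)+(A+\Phi)b=0$, when written in Fourier components using the twisted operators $\mu_{\pm}$ from Section \ref{prelim}, becomes the recurrence $\mu_{+}(b_{k-1})+\mu_{-}(b_{k+1})+\Phi\,b_{k}=0$ for every $k$; taking $k=N+1$ and using that $b$ has degree $N$ (so $b_{N+1}=b_{N+2}=0$) forces $\mu_{-}(b_{-N})=0$ (and symmetrically $\mu_{+}(b_{N})=0$, which I will also want for the geometric picture of the rank-one locus).

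Next I would unwind $\mu_{-}(b_{-N})=0$ in local isothermal coordinates: writing $b_{-N}=h e^{-iN\theta}$ and using formula (\ref{eq:mu}), the vanishing says exactly $\bar{\partial}(h e^{-N\lambda})+A_{\bar z}h e^{-N\lambda}=0$. Right-multiplying by the fixed vector $\xi\in\C^3$ (chosen so that $s:=b_{-N}\xi$ is not identically zero) preserves this identity, so $e^{-N\lambda}h\xi$ satisfies the same $\bar\partial_{A}$-equation; under the identification of $\Omega_{-N}$ with sections of $(M\times\C^3)\otimes K^{\otimes -N}$ this precisely says that $e^{-2N\lambda}s$ is a local $\bar\partial_{A}$-holomorphic section. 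That is the content of the first (unnumbered) lemma in the excerpt, which I may invoke directly.

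Then I would pass from the section to the line bundle. The section $s$ spans a line subbundle $L\subset M\times\C^3$ over the open dense set where $s\neq0$; by the holomorphicity just established this subbundle is $\bar\partial_{A}$-holomorphic there. At a zero $z_0$ of $s$, I would use that a $\bar\partial_{A}$-holomorphic local section can be written near $z_0$ as $e^{-2N\lambda(z)}s(z)=(z-z_0)^{k}w(z)$ with $w$ local holomorphic and $w(z_0)\neq0$ — this is the usual local normal form for holomorphic sections of a line bundle in one complex variable — so $w$ spans a $\bar\partial_{A}$-holomorphic line subbundle agreeing with $L$ away from $z_0$, hence $L$ extends holomorphically across $z_0$. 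Since the zero set of $s$ is finite (it is the zero set of a nonzero holomorphic section of a line bundle on a compact Riemann surface), this patches to a genuine $\bar\partial_{A}$-holomorphic line bundle on all of $M$, and by construction its fibre at each point where $b_{-N}\neq0$ contains $\operatorname{Im} b_{-N}$ (which is at most one-dimensional by the orthogonality analysis). That gives the claimed $L$.

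The only real subtlety — the ``hard part'' — is the extension across the zeros of $s$ and the verification that one indeed gets a well-defined \emph{global} line bundle containing $\operatorname{Im} b_{-N}$ everywhere; this rests on the fact that $\operatorname{rk} b_{-N}\le1$ pointwise and equals $1$ off a finite set (the same finite set where $s$ could vanish together with the rank drop), which was already argued in the preceding paragraphs of the excerpt. Everything else is bookkeeping: translating the cocycle equation into the $\mu_{\pm}$-recurrence, extracting $\mu_{-}(b_{-N})=0$, and reading off holomorphicity from (\ref{eq:mu}). I would therefore present the proof as: (i) derive $\mu_{-}(b_{-N})=0$ from the recurrence and the degree hypothesis; (ii) invoke the previous lemma to see $e^{-2N\lambda}s$ is $\bar\partial_{A}$-holomorphic; (iii) extend over the finitely many zeros to obtain the global holomorphic line bundle $L\supset\operatorname{Im} b_{-N}$.
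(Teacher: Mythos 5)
Your proposal is correct and follows essentially the same route as the paper: extract $\mu_{-}(b_{-N})=0$ from the Fourier recurrence, translate it via (\ref{eq:mu}) into the $\bar\partial_{A}$-equation for $e^{-2N\lambda}s$, and extend the spanned line bundle across the (finitely many) zeros of $s$. The only cosmetic slip is calling $s$ a section of a line bundle when arguing the zero set is finite — it is a section of the rank-three bundle $(M\times\C^3)\otimes K^{\otimes -N}$ — but the finiteness is still immediate since the zero set is a proper analytic subset of a compact Riemann surface.
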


We now wish to use the line bundle $L$ to construct an appropriate $g:M\to\s$
with norm one
such that when we run the B\"acklund transformation from the previous section
we obtain a cohomologically trivial pair of degree $\leq N-1$.

We would also like to find $a:SM\to SO(3)$ such that $u:=ab$ has degree
$\leq N-1$. The map $a$ should be related to $g$ by $ag=V(a)$ and
$a=a_{-1}+a_{0}+a_{1}$. For $u=ab$ to have degree $\leq N-1$ we need
\begin{align}
a_{1}b_{N-1}+a_{0}b_{N}&=0, \label{eq:degN}\\
a_{1}b_{N}&=0.
\label{eq:degN+1}
\end{align}
Note that by conjugating the previous two equations we obtain the corresponding relations involving $b_{-N}$ which ensure that $u$ has degree $\leq N-1$.

Write $b_{N}=C+iD$, where $C$ and $D$ are $3\times 3$ {\it real} matrices.
Since $\mbox{\rm Ker}\,b_{N}^t$ has (complex) dimension two, there must exist
a real vector $0\neq x\in \re^3\subset \C^3$ such that $C^{t}x=D^{t}x=0$.
The relations $b_{N}b_{N}^t=b_{N}^t b_{N}=0$ imply that for any real vector
$y\in\re^3$ we have:
\begin{align*}
&|C^{t}y|=|D^{t}y|,\\
&|Cy|=|Dy|\\
&\langle Cy,Dy\rangle=0.
\end{align*}
Consider now $y\in\re^3$ such that $Cy\neq 0$. Then $\{x,Cy,Dy\}$
is an orthogonal basis of $\re^3$ such that $|Cy|=|Dy|$.
Define $g\in \s$ with norm one as follows:
\begin{align*}
&g(x)=0,\\
&g(Cy)=Dy,\\
&g(Dy)=-Cy.
\end{align*}
Obviously the eigenspace $E_{i}$ of $g$ corresponding to the eigenvalue
$i$ is spanned by $Cy-iDy$ and $E_{-i}$ is spanned by $Cy+iDy$. Thus
$\mbox{\rm Im}\,b_{N}=E_{-i}$ and $\mbox{\rm Ker}\,b_{N}^{t}=E_{-i}\oplus\mbox{\rm Ker}(g)$ and $L=E_{i}=\mbox{\rm Im}\,b_{-N}$.

For this choice of $g$, Corollary \ref{cor:existencea} gives a smooth
$a:SM\to SO(3)$ with $ag=V(g)$ and such that
\begin{align*}
&\mbox{\rm Ker}\,a_{0}=E_{i}\oplus E_{-i},\\
&\mbox{\rm Ker}\,a_{1}=\mbox{\rm Ker}(g)\oplus E_{-i},\\
&\mbox{\rm Ker}\,a_{-1}=\mbox{\rm Ker}(g)\oplus E_{i}.
\end{align*}
Hence $\mbox{\rm Im}\,b_{N}\subset \mbox{\rm Ker}\,a_{1}$ and
$\mbox{\rm Im}\,b_{N}\subset \mbox{\rm Ker}\,a_{0}$ and thus 
$a_{1}b_{N}=a_{0}b_{N}=0$. This gives (\ref{eq:degN+1}) and to get
(\ref{eq:degN}) we need to show that $a_{1}b_{N-1}=0$.

Since $b\in SO(3)$, $b_{N}^{t}b_{N-1}+b_{N-1}^{t}b_{N}=0$ which says that
the complex matrix $b_{N}^{t}b_{N-1}$ is antisymmetric. Since the kernel
of $b_{N}$ is two dimensional, it follows that the kernel of $b_{N}^{t}b_{N-1}$
is at least two dimensional which combined with the fact that it is
antisymmetric forces $b_{N}^{t}b_{N-1}=0$. In other words
$\mbox{\rm Im}\,b_{N-1}\subset \mbox{\rm Ker}\,b_{N}^{t}=\mbox{\rm Ker}\,a_{1}$
which gives $a_{1}b_{N-1}=0$. 

Finally, Lemmas \ref{lemma:holo} and \ref{lemma:eq} tell us that
$-\star d_{A}g=[d_{A}g,g]$ and therefore by Theorem \ref{cor:back}, $u$ gives rise to a cohomologically trivial 
pair. Combining this with Theorem \ref{theorem:finite} we have proved:

\begin{Theorem} Let $M$ be a closed orientable surface of negative curvature.
Then any transparent $SO(3)$-pair can be obtained by successive applications
of B\"acklund transformations as described in Theorem \ref{cor:back}.
\label{theorem:main}
\end{Theorem}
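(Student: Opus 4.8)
The plan is to prove Theorem \ref{theorem:main} by an induction on the degree $N$ of a trivializing function. Given a transparent $SO(3)$-pair $(A,\Phi)$ on a closed negatively curved surface $M$, the Anosov property of the geodesic flow together with the Livsic theorem (Theorem \ref{livsic}) shows that $(A,\Phi)$ is cohomologically trivial, so there is a smooth $b:SM\to SO(3)$ with $X(b)+(A+\Phi)b=0$; write $A+\Phi=-X(b)b^{-1}$ and $f=b^{-1}V(b)$. By Theorem \ref{theorem:finite} the function $b$ has finite degree $N$. If $N=0$ then $b=b_0$ depends only on $x\in M$ and the equation reduces to $db_0+(A+\Phi)b_0=0$; reading off the $V$-derivative gives $\Phi\equiv 0$ and $A=-db_0\,b_0^{-1}$ is gauge equivalent to the trivial connection, which is the base case. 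So assume $N\geq 1$ and that every cohomologically trivial pair admitting a trivializing function of degree $\leq N-1$ is a successive B\"acklund transformation of a degree-zero pair.

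The inductive step is carried out in the body of the section already: from $b_N$ one extracts (by the orthogonality relations $b_N b_N^t=b_N^t b_N=0$, which force $\mathrm{rk}\,b_N\leq 1$) a line bundle $L=\mathrm{Im}\,b_{-N}$, and Lemma \ref{lemma:holo} shows $L$ is $\bar\partial_A$-holomorphic using $\mu_-(b_{-N})=0$. Writing $b_N=C+iD$ with $C,D$ real, one picks a real $0\neq x$ with $C^tx=D^tx=0$ and a real $y$ with $Cy\neq 0$, notes $\{x,Cy,Dy\}$ is orthogonal with $|Cy|=|Dy|$, and defines $g\in\s$ of norm one by $g(x)=0$, $g(Cy)=Dy$, $g(Dy)=-Cy$; then $E_i(g)=L$. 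Via Corollary \ref{cor:existencea} one obtains $a:SM\to SO(3)$ with $ag=V(a)$ and the stated kernels of $a_{-1},a_0,a_1$. The point of the construction is that $\mathrm{Im}\,b_N\subset\mathrm{Ker}\,a_0\cap\mathrm{Ker}\,a_1$ and $\mathrm{Im}\,b_{N-1}\subset\mathrm{Ker}\,b_N^t=\mathrm{Ker}\,a_1$ (the latter because $b_N^tb_{N-1}$ is antisymmetric with at least two-dimensional kernel, hence zero), which gives exactly (\ref{eq:degN}) and (\ref{eq:degN+1}) and their conjugates, so $u:=ab$ has degree $\leq N-1$. Since $L=E_i$ is $\bar\partial_A$-holomorphic, Lemma \ref{lemma:eq} yields $-\star d_Ag=[d_Ag,g]$, so Theorem \ref{cor:back} applies: $u$ is the trivializing function of a cohomologically trivial pair $(A_g,\Phi_g)$ given by the explicit formulas there. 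Moreover, by the Lemma in Section \ref{bt} on $q=aga^{-1}$ (with $g'=-q$, $a^{-1}g'=V(a^{-1})$), the B\"acklund transformation is invertible: running it on $(A_g,\Phi_g)$ with $(g',a^{-1})$ returns $(A,\Phi)$. Hence $(A,\Phi)$ is a B\"acklund transformation of $(A_g,\Phi_g)$, which has a trivializing function of degree $\leq N-1$, and the inductive hypothesis finishes the proof.

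The main obstacle — and the one place where negative curvature is genuinely used, beyond Livsic — is establishing finiteness of the Fourier degree, i.e.\ Theorem \ref{theorem:finite}; this is what makes the induction well-founded at all, and its proof rests on the curvature identity $|\mu_+u_m|^2=|\mu_-u_m|^2+\tfrac12\langle(i\star F_A-mK\,\mathrm{Id})u,u\rangle$ together with the recurrence $\mu_+(u_{m-1})+\mu_-(u_{m+1})+\Phi\,u_m=0$, which now has a Higgs cross term that must be controlled by the telescoping estimate on $a_m:=|\mu_+u_m|^2+|\mu_+u_{m-1}|^2$. The remaining delicate point is purely algebraic: verifying that the degree-lowering conditions (\ref{eq:degN})--(\ref{eq:degN+1}) really follow from the rank-one structure of $b_N$ and the orthogonality relations among the $b_k$, so that the single $g$ built from $b_N$ simultaneously kills both the top and (via the antisymmetry of $b_N^tb_{N-1}$) the next-to-top Fourier mode of $ab$; once this is in place, the holomorphicity of $L$ and the equivalences in Lemma \ref{lemma:eq} make the hypothesis of Theorem \ref{cor:back} automatic, and the invertibility of the transformation supplies the inductive descent without further work.
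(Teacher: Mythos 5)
Your proposal is correct and follows essentially the same approach as the paper: Livsic theorem plus Theorem \ref{theorem:finite} reduce to the finite-degree setting, the rank-one algebra of $b_N$ (so that $E_i(g)=\mathrm{Im}\,b_{-N}$) combined with Corollary \ref{cor:existencea}, Lemma \ref{lemma:holo} and Lemma \ref{lemma:eq} produces the degree-lowering B\"acklund transform, and the invertibility established in Section \ref{bt} via $g'=-aga^{-1}$ supplies the inductive descent --- you usefully make this last step explicit, where the paper leaves it implicit. One small imprecision: in the base case $N=0$ the clean argument is to isolate the degree-zero Fourier component of $X(b_0)+(A+\Phi)b_0=0$, which reads $\Phi b_0=0$ directly (the $\pm1$ modes give $A=-db_0\,b_0^{-1}$), rather than ``reading off the $V$-derivative''; the conclusion $\Phi\equiv 0$ and $A$ gauge-trivial is of course what the paper asserts.
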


\end{document}